\definecolor{plum}{rgb}{.5,0,1}
\theoremstyle{plain}
\newtheorem{theorem}{Theorem}
\newtheorem{lemma}{Lemma}
\newtheorem{proposition}{Proposition}
\theoremstyle{definition}
\newtheorem{definition}{Definition}
\theoremstyle{remark}
\newtheorem*{remark}{Remark}
\newcommand{\dist}{\textup{dist}}
\newcommand{\N}{\mathbb{N}}
\newcommand{\R}{\mathbb{R}}
\newcommand{\e}{\epsilon}
\newcommand{\lesim}{\lesssim}
\newcommand{\om}{\omega}
\newcommand{\cP}{\mathcal{P}}
\newcommand{\Id}{{\bf 1}}
\newcommand{\wh}{\widehat}
\newcommand{\si}{\sigma}
\newcommand{\Si}{\Sigma}
\newcommand{\supp}{\textup{supp}}
\newcommand{\De}{\Delta}
\newcommand{\ZS}{\mathbb S}
\newcommand{\Ga}{\Gamma}
\newcommand{\ga}{\gamma}
\newcommand{\cC}{\mathcal C}
\newcommand{\be}{\beta}
\newcommand{\bS}{\textbf{S}}
\newcommand{\cG}{\mathcal{G}}
\newcommand{\bU}{\textbf{U}}
\title{Small cap square function estimates}
\date{}
\author{Shengwen Gan} \address{Shengwen Gan\\  Deparment of Mathematics\\ University of Wisconsin-Madison\\ Madison, WI-53706, USA}\email{sgan7@wisc.edu}
\subjclass[2020]{42B10}
\begin{document}

\begin{abstract}



We introduce and prove small cap square function estimates for the unit parabola and the truncated light cone. More precisely, we study inequalities of the form
\[ \|f\|_p\le C_{\alpha,p}(R) \Big\|(\sum_{\gamma\in\Gamma_\alpha(R^{-1})}|f_\gamma|^2)^{1/2}\Big\|_p,  \]
where $\Gamma_\alpha(R^{-1})$ is the set of small caps of width $R^{-\alpha}$. We find sharp upper and lower bounds of the constant $C_{\alpha,p}(R)$.

\end{abstract}

\maketitle


\section{Introduction}
In this paper, we study the square function estimates.
We begin with the most general setting. 
Let $\Omega\subset \R^n$ be a set in the frequency space, and suppose we are given a partition of $\Omega$ into subsets $\Si=\{\si\}$:
$$ \Omega=\bigsqcup_{\si\in\Si}\si. $$

We will only consider the case when $\si$ are morally rectangles.
For any function $f$, we define $f_\si=(\psi_\si \wh f)^\vee$, where $\psi_\si$ is a smooth bump function adapted to $\si$. We will also assume $\supp\wh f\subset \Omega$ in the following discussions. 
The inequality we are interested in is of the following form:

\medskip

\noindent
{\bf Square function estimate}:
$$ \|f\|_p\le C_{p,\Si}\|(\sum_{\si\in\Si}|f_\si|^2)^{1/2}\|_p. $$
The goal is to find the best constant $C_{p,\Si}$ that works for all test functions $f$.

\medskip

This type of estimate is of huge interest in harmonic analysis.
We briefly review some well-known results. 

When $\Omega$ is the $R^{-1}$-neighborhood of the unit parabola $\cP=\{(\xi,\xi^2)\in\R^2: |\xi|\le 1\}$ and $\Si=\{\si\}$ is the set of $\sim R^{-1/2}\times R^{-1}$-caps that form a partition of $\Omega$, then an argument of C\'ordoba-Fefferman (see also \cite[Proposition 3.3]{demeter2020fourier}) gives 
$$ \|f\|_4\lesim\|(\sum_{\si\in\Si}|f_\si|^2)^{1/2}\|_4. $$
(Throughout this article, we suppress the $\sim$ symbol for simplicity when the precise scale is unimportant.)

When $\Omega$ is the $R^{-1}$-neighborhood of the unit cone $\cC=\{(\xi_1,\xi_2,\xi_3)\in\R^3: \xi_3=\sqrt{\xi_1^2+\xi_2^2}, 1/2\le\xi_3\le 1\}$ and $\Si=\{\si\}$ are $1\times R^{-1/2}\times R^{-1}$-caps that form a partition of $\Omega$, then the sharp $L^4$ square function estimate was proved by Guth-Wang-Zhang \cite{guth2020sharp}:
$$ \|f\|_4\lessapprox\|(\sum_{\si\in\Si}|f_\si|^2)^{1/2}\|_4. $$
Here, $A\lessapprox B$ means $A\lesim_\e R^\e B$ for any $\e>0$.

When $\Omega$ is certain neighborhood of a moment curve, it was studied by Gressman, Guo, Pierce, Roos and Yung \cite{gressman2021reversing}. The sharp $L^7$ estimate was obtained by Maldague \cite{maldague2022sharp}. There are some other related results (see \cite{pierce2021superorthogonality}, \cite{gressman2022new}).

\medskip

In the discussion above, we see that the size of caps in the partition of parabola is $R^{-1/2}\times R^{-1}$; the size of caps in the partition of cone is $1\times R^{-1/2}\times R^{-1}$. We usually call them the canonical partition.
Besides the canonical partition of parabola and cone, Demeter, Guth and Wang \cite{demeter2020small} introduced the ``small cap decoupling" which is the decoupling inequality for a finer partition than the canonical partition. Similarly, we can also ask the question about the \textit{small cap square function estimate}.

The goal of this paper is to prove the sharp square function estimates for the small caps of parabola and cone. We will first define the small caps. Then we will introduce and study examples which give sharp lower bounds of the constants. Finally, we will prove the sharp bounds of the constants. 

\subsection{Small caps}
\subsubsection{Small caps for parabola}
Let $\cP:=\{(\xi,\xi^2):\xi\in\R,|\xi|\le 1\}$ be the unit parabola, and $N_{R^{-1}}(\cP)$ be its $R^{-1}$-neighborhood.
For $1/2\le \alpha\le 1$, let $\Ga_\alpha(R^{-1})$ be the partition of $N_{R^{-1}}(\cP)$ into rectangular boxes of dimensions $R^{-\alpha}\times R^{-1}$. More precisely, each $\ga\in\Ga_\alpha(R^{-1})$ is of form
\[ \ga=(I\times \R)\cap N_{R^{-1}}(\cP), \]
where $I\subset [-1,1]$ is an interval of length $R^{-\alpha}$.
Note that we have $\#\Ga_\alpha(R^{-1})\sim R^\alpha$.
Our square function estimate is 
\begin{theorem}\label{thmparabola}
For $\supp\wh f\subset N_{R^{-1}}(\cP)$, we have
\begin{equation}\label{ineqparabola}
    \|f\|_{L^p(\R^2)}\lessapprox
    C_{\alpha,p}(R)\Big\|(\sum\limits_{\ga\in\Ga_\alpha(R^{-1})}|f_\ga|^2)^{1/2}\Big\|_{L^p(\R^2)},
\end{equation}
where
\begin{equation}\label{Cp}
    C_{\alpha,p}(R)= 
    \begin{cases}
     R^{\alpha(\frac{1}{2}-\frac{2}{p})}\ \ \ \ \ p\ge 4\alpha+2,\\
     R^{(\alpha-\frac{1}{2})(\frac{1}{2}-\frac{1}{p})}\ \ \ \ \ 2\le p\le 4\alpha+2.
    \end{cases}
\end{equation}
\end{theorem}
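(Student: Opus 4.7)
The strategy is to establish \eqref{ineqparabola} at three anchor exponents---$p=2$, $p=\infty$, and the critical $p_c:=4\alpha+2$---and then obtain all other $p$ by interpolation. Since the map $\{f_\gamma\}\mapsto\sum_\gamma f_\gamma=f$ is linear from $L^p(\ell^2)$ to $L^p$ on the subspace where $\supp\widehat{f_\gamma}\subset\gamma$, its operator norm $C_{\alpha,p}(R)$ is log-convex in $1/p$ by Riesz--Thorin. One checks directly that the two-branch formula \eqref{Cp} is exactly the piecewise-linear interpolant in $1/p$ of the three points $(1/p,\log_R C_{\alpha,p}(R))=(0,\alpha/2)$, $(1/p_c,\alpha(2\alpha-1)/(4\alpha+2))$, and $(1/2,0)$, with kink at $1/p=1/p_c$. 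The two trivial endpoints are immediate: at $p=2$, Plancherel and the bounded overlap of the Fourier supports give $\|f\|_2^2\sim\sum_\gamma\|f_\gamma\|_2^2$, so $C_{\alpha,2}(R)\sim 1$; at $p=\infty$, pointwise Cauchy--Schwarz in $\gamma$ together with $\#\Gamma_\alpha(R^{-1})\sim R^\alpha$ gives $C_{\alpha,\infty}(R)\lesssim R^{\alpha/2}$.

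The substance lies in the critical case $p=p_c=4\alpha+2$, with target constant $R^{\alpha(2\alpha-1)/(4\alpha+2)}$. I plan a two-scale argument. Group the small caps by canonical caps $\tau$ of dimensions $R^{-1/2}\times R^{-1}$, each containing $\sim R^{\alpha-1/2}$ small caps, writing $f=\sum_\tau f_\tau$ with $f_\tau=\sum_{\gamma\subset\tau}f_\gamma$. Because $p_c\in[4,6]$, one has at the canonical scale both the sharp C\'ordoba--Fefferman $L^4$ square function estimate $\|f\|_4\lesssim\|(\sum_\tau|f_\tau|^2)^{1/2}\|_4$ and the Bourgain--Demeter $L^{p_c}$ decoupling $\|f\|_{p_c}\lessapprox(\sum_\tau\|f_\tau\|_{p_c}^2)^{1/2}$. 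Inside each $\tau$, after an affine rescaling sending $\tau$ to a unit slab, the small caps $\gamma\subset\tau$ become disjoint strips of width $R^{1/2-\alpha}$ perpendicular to a fixed direction, so the within-$\tau$ square function should reduce to a one-dimensional Rubio de Francia $\ell^2$ estimate for disjoint intervals (applied fiberwise, with constant $\lesssim_p 1$). The main work is to combine these two scales with the correct accounting of the number of small caps per $\tau$.

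The main obstacle I foresee is that the naive combination is lossy: a pointwise Cauchy--Schwarz $|f_\tau|^2\le R^{\alpha-1/2}\sum_{\gamma\subset\tau}|f_\gamma|^2$, fed into the canonical $L^4$ square function, yields exponent $(\alpha-1/2)/2$ at $p=4$, which is twice the value required by \eqref{Cp}. The extra saving must come from genuinely $L^{p_c}$-level analysis that exploits the transversality of the wave-packet tubes (of dimensions $R^\alpha\times R$) attached to distinct $\tau$'s---most likely via a high--low frequency decomposition in the spirit of Guth--Wang--Zhang \cite{guth2020sharp}, or via an induction-on-scales argument in the parameter $\alpha$ itself. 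A secondary subtlety is that the target right-hand side is the $L^{p_c}\ell^2$ norm, which is strictly smaller than the $\ell^2 L^{p_c}$ norm that decoupling supplies for $p_c>2$; hence small-cap decoupling cannot be invoked as a black box, and some wave-packet-level analysis appears indispensable.
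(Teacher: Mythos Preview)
Your proposal has a genuine gap at the heart of the matter: the critical case $p=p_c=4\alpha+2$ is never proved, only planned in vague terms, and one of the building blocks you name is false. You assert that after rescaling $\tau$ to a unit slab, the within-$\tau$ estimate $\|f_\tau\|_{p_c}\lesssim\|(\sum_{\gamma\subset\tau}|f_\gamma|^2)^{1/2}\|_{p_c}$ follows from one-dimensional Rubio de Francia with constant $O_p(1)$. But Rubio de Francia is the \emph{forward} square function inequality for $p\ge 2$; the reverse direction you need holds with $O(1)$ constant only for $p\le 2$. For $p>2$ and $N$ adjacent intervals, testing on aligned phases gives sharp constant $N^{1/2-1/p}$; with $N=R^{\alpha-1/2}$ this is exactly $R^{(\alpha-1/2)(1/2-1/p)}$---the second branch of \eqref{Cp}---and this is precisely what the paper records as Lemma~\ref{lemnarrow}. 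So your two-scale plan already loses before the $\ell^2L^{p_c}$ versus $L^{p_c}\ell^2$ issue you flag. The interpolation reduction is also more delicate than you indicate: the analytic families in Riesz--Thorin do not preserve Fourier support, so one must recast the estimate as an honest operator bound on all of $L^p(\ell^2)$ via $\tilde T(\{g_\gamma\})=\sum_\gamma P_\gamma g_\gamma$, and this requires the vector-valued bound $\|(\sum_\gamma|P_\gamma g_\gamma|^2)^{1/2}\|_p\lesssim\|(\sum_\gamma|g_\gamma|^2)^{1/2}\|_p$, which does not follow from Fefferman--Stein because the dual rectangles $\gamma^*$ point in all directions.

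The paper takes a completely different, more elementary route that avoids both difficulties and proves the estimate for every $p\ge 2$ directly, without singling out $p_c$. A broad--narrow iteration at dyadic scales $\Delta\in[R^{-1/2},1]$ splits $\|f\|_{L^p(B_R)}^p$ into a narrow term $\sum_{|\theta|=R^{-1/2}}\|f_\theta\|_p^p$ (handled by Lemma~\ref{lemnarrow}) and bilinear terms $\|(f_{\tau_1}f_{\tau_2})^{1/2}\|_p^p$ with $\tau_1,\tau_2\subset\tau$ transverse at scale $\Delta$. Each bilinear term is localized to translates of $\bigcap_{\gamma\subset\tau}\gamma^*$, on which the $f_\gamma$ are locally constant and orthogonal; after parabolic rescaling it is controlled by the elementary bilinear restriction estimate $\|(g_1g_2)^{1/2}\|_{L^p(B_r)}\lesssim r^{2/p-1}(\|g_1\|_2\|g_2\|_2)^{1/2}$, valid for all $p\ge 2$ by H\"older between $p=2,4,\infty$. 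What remains is a direct numerical check, and the threshold $p=4\alpha+2$ emerges naturally in the regime $\Delta\le R^{\alpha-1}$. No high--low decomposition and no induction on $\alpha$ are used.
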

\begin{remark}
{\rm 
We remark that $p\ge 4\alpha+2$ is equivalent to $\alpha(\frac{1}{2}-\frac{2}{p})\ge (\alpha-\frac{1}{2})(\frac{1}{2}-\frac{1}{p})$. Therefore, \eqref{Cp} is equivalent to (up to constant) $C_{\alpha,p}(R)\sim  R^{\alpha(\frac{1}{2}-\frac{2}{p})}+ R^{(\alpha-\frac{1}{2})(\frac{1}{2}-\frac{1}{p})}.$
}
\end{remark}

\subsubsection{Small caps for cone}
Denote the truncated cone in $\R^3$ by $$\cC:=\{(\xi_1,\xi_2,\xi_3)\in\R^3: \xi_3=\sqrt{\xi_1^2+\xi_2^2}, 1/2\le\xi_3\le 1\}.$$
For $1/2\le \be\le 1$, let $\Ga_{\be}(R^{-1})$ be the partition of $N_{R^{-1}}(\cC)$ into caps of dimensions $1\times R^{-\be}\times R^{-1}$. More precisely, we first choose a partition of $\ZS^1$ into $R^{-\be}$-arcs: $\ZS^1=\sqcup\si$. For each arc $\si$, consider the $R^{-1}$-neighborhood of 
$$ \{(\xi_1,\xi_2,\xi_3)\in\cC: \frac{(\xi_1,\xi_2)}{\sqrt{\xi_1^2+\xi_2^2}}\in \si\}, $$
which is a cap of dimensions $1\times R^{-\be}\times R^{-1}$. $\Ga_{\be}(R^{-1})$ is the set of caps constructed in this way (see Figure \ref{conecap}). Note that $\#\Ga_{\be}(R^{-1})\sim R^{\be}$. Our square function estimate is

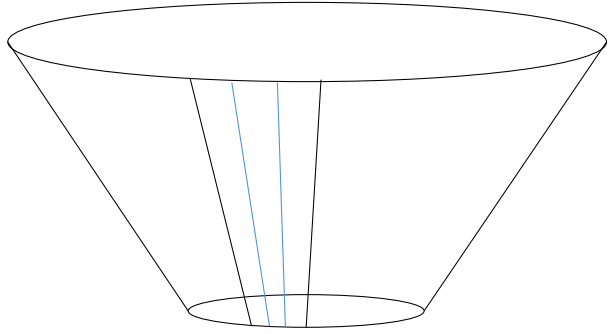
\begin{figure}
    \centering
    
    \begin{tikzpicture}[x=0.75pt,y=0.75pt,yscale=-1,xscale=1]

\draw   (85,65) .. controls (85,53.95) and (152.6,45) .. (236,45) .. controls (319.4,45) and (387,53.95) .. (387,65) .. controls (387,76.05) and (319.4,85) .. (236,85) .. controls (152.6,85) and (85,76.05) .. (85,65) -- cycle ;
\draw   (176,200.75) .. controls (176,196.19) and (202.64,192.5) .. (235.5,192.5) .. controls (268.36,192.5) and (295,196.19) .. (295,200.75) .. controls (295,205.31) and (268.36,209) .. (235.5,209) .. controls (202.64,209) and (176,205.31) .. (176,200.75) -- cycle ;
\draw    (85,65) -- (176,200.75) ;
\draw    (387,65) -- (295,200.75) ;
\draw    (177,83.5) -- (208,208.5) ;
\draw    (243,84) -- (235.5,209) ;
\draw [color={rgb, 255:red, 74; green, 144; blue, 226 }  ,draw opacity=1 ]   (198,85.5) -- (217,208.5) ;
\draw [color={rgb, 255:red, 74; green, 144; blue, 226 }  ,draw opacity=1 ]   (221,85.5) -- (221.94,114.44) -- (225,208.5) ;

\end{tikzpicture}

    \caption{Small caps of the cone}
    \label{conecap}
\end{figure}

\begin{theorem}\label{thmvcone}
For $\supp\wh f\subset N_{R^{-1}}(\cC)$, we have
\begin{equation}\label{ineqvcone}
    \|f\|_{L^p(\R^3)}\lessapprox
    C_{\be,p}(R)\Big\|(\sum\limits_{\ga\in\Ga_{\be}(R^{-1})}|f_\ga|^2)^{1/2}\Big\|_{L^p(\R^3)},
\end{equation}
where
\begin{equation}\label{cbeta}
    C_{\be,p}(R)= 
    \begin{cases}
      R^{\frac{\be}{2}}\ \ \ \ \ p\ge 8,\\
     R^{\frac{\be}{2}+\frac{1}{4}-\frac{2}{p}}\ \ \ \ \ 4\le p\le8\\
     R^{(\be-\frac{1}{2})(1-\frac{2}{p})}\ \ \ \ \ 2\le p\le 4.
    \end{cases}
\end{equation}
\end{theorem}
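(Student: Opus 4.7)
The plan is to establish the inequality at the critical exponents $p=2,\,4,\,8$, and then interpolate to fill in the intermediate ranges.

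At $p=2$, Plancherel gives $\|f\|_2^2=\sum_\ga\|f_\ga\|_2^2=\|(\sum_\ga|f_\ga|^2)^{1/2}\|_2^2$, so $C_{\be,2}(R)\lesssim 1$. At the far right endpoint, pointwise Cauchy--Schwarz in the $\sim R^\be$ small caps yields $|f(x)|\lesssim R^{\be/2}(\sum_\ga|f_\ga(x)|^2)^{1/2}$, hence $\|f\|_p\lesssim R^{\be/2}\|(\sum_\ga|f_\ga|^2)^{1/2}\|_p$ for every $p$; this gives $C_{\be,p}(R)\lesssim R^{\be/2}$ on the whole range $p\ge 8$, including the $p=8$ endpoint.

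For $p=4$, I would group the small caps $\ga\in\Ga_\be(R^{-1})$ into canonical caps $\tau$ of dimensions $1\times R^{-1/2}\times R^{-1}$ (each containing $\sim R^{\be-1/2}$ small caps), setting $f_\tau=\sum_{\ga\subset\tau}f_\ga$. Cauchy--Schwarz in $\ga$ gives $|f_\tau|^2\le R^{\be-1/2}\sum_{\ga\subset\tau}|f_\ga|^2$. Combining with the sharp $L^4$ canonical-cap square function estimate of Guth--Wang--Zhang \cite{guth2020sharp}, $\|f\|_4\lessapprox\|(\sum_\tau|f_\tau|^2)^{1/2}\|_4$, yields $\|f\|_4\lessapprox R^{(\be-1/2)/2}\|(\sum_\ga|f_\ga|^2)^{1/2}\|_4$, matching $C_{\be,4}(R)$.

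The intermediate ranges then follow by complex interpolation. I would view the inequality as a norm bound on the linear operator $T:(f_\ga)\mapsto\sum_\ga f_\ga$, restricted to the closed subspace of $L^p(\ell^2)$ whose components have Fourier support in the designated caps; Stein's interpolation applies in the range $1<p<\infty$ since the smooth Fourier projections $P_\ga$ (corresponding to the bumps $\psi_\ga$) are bounded on $L^p(\ell^2)$ there by vector-valued Calder\'on--Zygmund. Interpolating between $p=2$ and $p=4$ produces $C_{\be,p}(R)\lesssim R^{(\be-1/2)(1-2/p)}$ for $2\le p\le 4$; interpolating between $p=4$ and $p=8$ produces $C_{\be,p}(R)\lesssim R^{\be/2+1/4-2/p}$ for $4\le p\le 8$. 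Together with the trivial bound for $p\ge 8$, this exhausts all three cases of \eqref{cbeta}.

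The only deep ingredient is the sharp $L^4$ canonical-cap square function estimate for the cone by Guth--Wang--Zhang, which is used as a black box; that is the main obstacle in any direct proof, but it is already resolved in the cited work. Everything else reduces to Plancherel, Cauchy--Schwarz, and standard complex interpolation.
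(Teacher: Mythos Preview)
Your endpoint arguments at $p=2$, $p=4$, and $p\ge 8$ are all correct, but the interpolation step is a genuine gap --- and the paper singles this out explicitly. The Remark immediately following Theorem~\ref{thmvcone} observes that the sharp constant $C_{\beta,p}(R)$, as a function of $1/p$, has a \emph{concave} corner at $p=8$ (see Figure~\ref{graph}), whereas any operator norm obtained through Riesz--Thorin or Stein interpolation is automatically log-convex in $1/p$. So no linearization $T:L^p(\ell^2)\to L^p$ can have operator norm equal to the sharp square function constant across the full range.

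Concretely, the problem in your argument is the assertion that the diagonal family $(g_\ga)\mapsto(P_\ga g_\ga)$ is bounded on $L^p(\ell^2)$ uniformly in $R$. Vector-valued Calder\'on--Zygmund theory does not give this: the matrix symbol $\mathrm{diag}(\psi_\ga(\xi))$ fails the Mikhlin--H\"ormander condition uniformly, since $|\partial\psi_\ga|\sim R^\beta$ on $|\xi|\sim 1$. Pointwise one has $|P_\ga g_\ga|\lesssim M_{\ga^*}|g_\ga|$ for the averaging maximal operator over translates of $\ga^*$, but the dual boxes $\ga^*$ point in $\sim R^\beta$ genuinely distinct directions, so the required vector-valued bound is a Kakeya-type maximal inequality and cannot hold with $O(1)$ constant. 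Interpolating directly on the Fourier-constrained subspace $X_p\subset L^p(\ell^2)$ runs into the same wall: identifying $[X_{p_0},X_{p_1}]_\theta$ with $X_p$ (with the $L^p(\ell^2)$ norm) would require exactly such a uniformly bounded retraction.

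The paper's proof therefore avoids interpolation altogether. For $2\le p\le 4$ it starts from the Guth--Wang--Zhang wave envelope estimate (Theorem~\ref{GWZstandard}), combines it with the $L^2$ identity via H\"older at the level of the wave envelopes $\|S_Uf\|_2$, and then extracts additional local orthogonality of the $f_\ga$ on suitably enlarged boxes $V_\theta\supset U_\tau$. For the hard range $4\le p\le 8$ the key input is the \emph{amplitude-dependent} wave envelope estimate of Guth--Maldague (Theorem~\ref{amplitudethm}): it bounds $\alpha^4|\{|f|>\alpha\}|$ by a sum over only those envelopes $U$ satisfying $|U|^{-1}\|S_Uf\|_2^2\gtrapprox(\alpha s)^2$, and this lower bound on the amplitude is exactly what lets one upgrade the exponent from $4$ to $p$ without paying more than the sharp constant. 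That refinement, not interpolation, is what closes the $4\le p\le 8$ case.
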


\begin{remark}\label{rmk}
{\rm 
We remark that there is no interpolation argument in the proof of square function estimate. It is because that we cannot rewrite our square function estimate in the form of
\[ \|Tg\|_X\lesim C \|g\|_Y, \]
where $X, Y$ are some normed vector spaces and $T$ is a linear operator. Another way to see the interpolation argument is prohibited is by looking at the numerology in \eqref{cbeta}. We draw the graph of $(\frac{1}{p},\log_R C_{\beta,p}(R))$, where we ignore the $C_\e R^\e$ factor in $C_{\beta,p}(R)$ (See Figure \ref{graph}). We see
the critical exponent $p=8$ corresponds to a concave point $(\frac{1}{8},\frac{\beta}{2})$ in the graph. But if the interpolation argument works, then the graph should be convex which is a contradiction. Not being allowed to do interpolation will be the main difficulty in the proof. This means that we need to prove the estimate for all $p$, but not only the critical $p$. Let us consider the case $\beta=1/2$. One critical exponent $p=4$ was proved by Guth-Wang-Zhang \cite{guth2020sharp}. The result for another critical exponent $p=8$ and hence for $p\in(4,8)$ is not included in \cite{guth2020sharp}. We also remark that
$$C_{\be,p}(R)\sim  \min\{ R^{\frac{\be}{2}}, R^{\frac{\be}{2}+\frac{1}{4}-\frac{2}{p}}+ R^{(\be-\frac{1}{2})(\frac{1}{2}-\frac{1}{p})} \}.$$
}
\end{remark}

\begin{figure}

\begin{tikzpicture}[x=0.75pt,y=0.75pt,yscale=-1,xscale=1]

\draw  (50.37,236.24) -- (420.63,236.24)(87.39,16) -- (87.39,260.71) (413.63,231.24) -- (420.63,236.24) -- (413.63,241.24) (82.39,23) -- (87.39,16) -- (92.39,23)  ;
\draw    (86.69,91.76) -- (178.5,91.76) ;
\draw    (223.9,176.23) -- (304.61,236.73) ;
\draw    (178.5,91.76) -- (223.9,176.23) ;

\draw (120.99,116.01) node   [align=left] {\begin{minipage}[lt]{68.61pt}\setlength\topsep{0pt}

\end{minipage}};
\draw (444.93,216.54) node [anchor=north west][inner sep=0.75pt]    {$\frac{1}{p}$};
\draw (214.9,240.94) node [anchor=north west][inner sep=0.75pt]    {$\frac{1}{4}$};
\draw (294.61,241.94) node [anchor=north west][inner sep=0.75pt]    {$\frac{1}{2}$};
\draw (-1,20) node [anchor=north west][inner sep=0.75pt]    {$\log_{R}\big( C_{\beta,p}(R)\big)$};

\end{tikzpicture}

\caption{}
    \label{graph}
\end{figure}



\subsection{Elementary tools}

We briefly introduce the notion of \textit{dual rectangle} and \textit{local orthogonality}.

\begin{definition}

Let $R$ be a rectangle of dimensions $a\times b\times c$. Then the dual rectangle of $R$, denoted by $R^*$, is the rectangle centered at the origin of dimensions $a^{-1}\times b^{-1}\times c^{-1}$. Here $R^*$ is made from $R$ by letting the length of each edge of $R$ become the reciprocal.
\end{definition}

From our definition, we see that if $R_2$ is a translated copy of $R_1$, then $R_1^*=R_2^*$. The motivation for defining dual rectangle is the following result.

\begin{lemma}
    For any rectangle $R$, there exists a smooth function $\om_R$ which satisfies $\frac{1}{10}\cdot\Id_R(x)\le \om_R(x)\le 10\cdot\Id_R(x)$ for $x\in R$, and $\om_R$ decays rapidly outside $R$. Also, $\supp\wh \om_R \subset R^*$.
\end{lemma}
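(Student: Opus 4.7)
The plan is to build $\om_R$ as a Schwartz function obtained from a fixed ``universal'' bump by affine transformation, so that the Fourier-side constraint $\supp\wh\om_R \subset R^*$ is realised exactly, while the spatial lower bound on $R$ and rapid decay outside $R$ come from the Schwartz nature of the fixed bump.

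First I would reduce to the case of an axis-aligned rectangle centred at the origin. Replacing $R$ by a translate only multiplies $\wh\om_R$ by a unimodular phase, so it does not move $\supp\wh\om_R$, and the physical-side bounds are translation-invariant. If $R = U R_0$ with $U\in O(n)$ and $R_0$ axis-aligned, set $\om_R(x)=\om_{R_0}(U^{-1}x)$; then $\wh\om_R(\xi)=\wh\om_{R_0}(U^{-1}\xi)$ is supported in $U R_0^{*}=R^{*}$ because the dual rectangle rotates along with the rectangle. So it suffices to build $\om_R$ when $R=[-a_1,a_1]\times\cdots\times[-a_n,a_n]$, and here I would take the tensor product $\om_R(x_1,\dots,x_n)=\prod_{i=1}^n \phi_{a_i}(x_i)$, reducing everything to a one-dimensional statement.

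For the one-dimensional building block I would fix once and for all a function $\chi\colon\R\to\R$ that is smooth, Schwartz, satisfies $\chi(x)\ge 1/2$ for $|x|\le 1$, and has $\supp\wh\chi$ compact. Such a $\chi$ is produced by Fourier inversion: pick a nonnegative smooth $\eta$ with $\supp\eta\subset[-\e,\e]$ and $\int\eta=1$, and let $\chi=\wh\eta$. Then $\wh\chi(\xi)=\eta(-\xi)$ is compactly supported, $\chi$ is Schwartz, $\chi(0)=1$, and by choosing $\e$ small enough we can guarantee $|\chi(x)-1|<1/2$ on $[-1,1]$, since $\chi(x)=\int\eta(t)e^{-2\pi i xt}\,dt$ differs from $\int\eta=1$ by $O(\e)$ uniformly on $[-1,1]$. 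After a further normalisation (so that the compact support of $\wh\chi$ sits inside $[-1,1]$) define $\phi_{a}(x):=\chi(x/a)$. Scaling gives $\supp\wh{\phi_a}\subset [-a^{-1},a^{-1}]$, the lower bound $\phi_a\ge 1/2$ on $[-a,a]$, and Schwartz decay at rate determined by $a$.

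Taking the tensor product over coordinates, undoing the rotation, and putting back the translation yields $\om_R$ with $\supp\wh\om_R\subset R^*$, with $\om_R\ge (1/2)^n$ on $R$, and Schwartz-type decay away from $R$; absorbing the explicit constants into the universal bound $10$ finishes the construction. The only genuinely non-routine step is producing the one-dimensional $\chi$ above, i.e.\ finding a Paley--Wiener function that is bounded below on a compact interval of diameter comparable to the reciprocal of its bandwidth; this is handled by the $\wh\eta$ trick. Everything else is bookkeeping about dual rectangles under translation, rotation, and dilation.
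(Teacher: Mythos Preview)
The paper does not prove this lemma; it simply says ``This lemma is very standard, so we omit the proof.'' Your construction is exactly the standard one that the paper has in mind: reduce to an axis-parallel box by translation and rotation, tensorise, and build the one-dimensional piece as the (inverse) Fourier transform of a narrow bump so that the Fourier support is forced to be compact while the physical-side value stays close to $1$ on the unit interval. So your proposal is correct and matches the intended (omitted) argument.

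Two cosmetic remarks, neither of which is a gap. First, to ensure $\chi=\wh\eta$ is real-valued (so that the inequality $\chi\ge 1/2$ on $[-1,1]$ makes literal sense), take $\eta$ to be \emph{even}; then $\chi(x)=\int\eta(t)\cos(2\pi xt)\,dt$ is real. Second, in the paper's convention the dual of a box of side lengths $2a_i$ has side lengths $(2a_i)^{-1}$, so you need $\supp\wh\chi\subset[-1/4,1/4]$ rather than $[-1,1]$; this is already automatic once $\e$ is small, so your ``further normalisation'' step is in fact unnecessary. The upper bound $\om_R\le 10$ on $R$ follows from $\chi\le 3/2$ on $[-1,1]$ and the fact that the dimension $n$ is fixed (here $n=2$ or $3$), so $(3/2)^n\le 10$.
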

This lemma is very standard, so we omit the proof. The next result is the local orthogonality property.

\begin{lemma}
    Let $R$ be a rectangle and $\{f_i\}$ is a set of functions. If $\{ \supp\wh f_i+R^* \}$ are finitely overlapping, then
    \begin{equation}\label{ineqlocal}
        \int_R |\sum f_i|^2\lesim \int \sum|f_i|^2 |\om_R|^2.
    \end{equation} 
\end{lemma}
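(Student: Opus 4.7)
The plan is to use the weight $\om_R$ as a Schwartz-type smooth substitute for the indicator $\Id_R$, which lets us pass to a global integral that Plancherel can handle. First, since $\om_R\gesim 1$ on $R$ and $\Id_R=0$ off $R$, we have $\Id_R\lesim |\om_R|^2$ pointwise, so
\[
\int_R\Big|\sum_i f_i\Big|^2 \lesim \int \Big|\sum_i f_i\Big|^2|\om_R|^2 = \int \Big|\sum_i (f_i\om_R)\Big|^2.
\]

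Next, I would take the Fourier transform inside the sum. Since $\widehat{f_i\om_R}=\wh{f_i}\ast\wh{\om_R}$ and $\supp\wh{\om_R}\subset R^*$, the convolution is supported in $\supp\wh{f_i}+R^*$. By hypothesis these sets are finitely overlapping, so applying Plancherel and then the finite-overlap bound (which costs only an absolute constant) yields
\[
\int\Big|\sum_i(f_i\om_R)\Big|^2 = \int\Big|\sum_i\widehat{f_i\om_R}\Big|^2 \lesim \sum_i\int|\widehat{f_i\om_R}|^2.
\]
Finally, one more application of Plancherel in the reverse direction gives $\sum_i\int|\widehat{f_i\om_R}|^2=\sum_i\int|f_i|^2|\om_R|^2$, which is \eqref{ineqlocal}.

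There is no real obstacle here; the only things to be a bit careful about are that (i) the pointwise domination $\Id_R\lesim|\om_R|^2$ really is global (automatic since $\Id_R$ vanishes off $R$), and (ii) the finite-overlap step is just the elementary fact that for a family $\{g_i\}$ with $\{\supp g_i\}$ finitely overlapping, $\|\sum_i g_i\|_2^2\lesim\sum_i\|g_i\|_2^2$, which follows from Cauchy–Schwarz applied to $\sum_i g_i = \sum_i g_i\cdot\Id_{\supp g_i}$.
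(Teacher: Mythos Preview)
Your proof is correct and essentially identical to the paper's: dominate $\Id_R$ by $|\om_R|^2$, apply Plancherel, use the finite overlap of $\supp\wh{f_i}+R^*$ to split the sum, and apply Plancherel back. The only difference is that you spell out the finite-overlap step in slightly more detail than the paper does.
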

\begin{proof}
    \begin{align*}
        \int_R |\sum f_i|^2\lesim \int |\sum f_i \om_R|^2=\int |\sum \wh{f_i\om_R}|^2.
    \end{align*}
Note that $\wh{f_i\om_R}=\wh{f_i}*\wh{\om_R}$ is supported in $\supp \wh f_i+R^*$. By the finitely overlapping property, we see the above is bounded by
\[ \lesim \int \sum|\wh{f_i\om_R}|^2=\int \sum |f_i \om_R|^2. \]
\end{proof}

\begin{remark}
{\rm
    Note that $\om_R$ is essentially $\Id_R$ by ignoring the rapidly decaying tail. It turns out that the tail is always harmless. Therefore, to get rid of some irrelevant technicalities, we will just ignore the rapidly decaying tail, and write \eqref{ineqlocal} as
\[ \int_R |\sum f_i|^2\lesim \int_R \sum|f_i|^2. \]

}
\end{remark}

There is another notion called \textit{comparable}. Given two rectangles $R_1, R_2$, we say $R_1$ is essentially contained in $R_2$, if there exists a universal constant $C$ (say $C=100$) such that 
\[ R_1\subset C R_2. \]
We say $R_1$ and $R_2$ are comparable if $R_1$ is essentially contained in $R_2$ and vice versa, i.e.,
\[ \frac{1}{C} R_1\subset R_2\subset C R_1. \]
Throughout this paper, we will just ignore the unimportant constant $C$, and just write $R_1\subset R_2$ to denote that $R_1$ is essentially contained in $R_2$.

\bigskip

\begin{sloppypar}
\noindent {\bf Acknowledgement.}
The author would like to thank Prof. Larry Guth and Dominique Maldague for helpful discussions. The author also want to thank Changkeun Oh for the discussion of Theorem 1. The author also want to thank referees for carefully reading the manuscript and many helpful suggestions.
\end{sloppypar}

\section{Small cap square function estimate for parabola}
We prove Theorem \ref{thmparabola} in this section. We begin with the sharp examples.

\subsection{Sharp examples} There are two types of examples: \textit{concentrated example} and \textit{flat example}. 

\medskip

\noindent\fbox{Case 1: $p\ge 4\alpha+2$}

We introduce the concentrated example. Choose $f$ such that $\wh f(\xi)=\psi_{N_{R^{-1}}(\cP)}(\xi)$, where $\psi_{N_{R^{-1}}(\cP)}$ is a smooth bump function supported in $N_{R^{-1}}(\cP)$. We see that $f(0)=\int\wh f(\xi)d\xi \sim R^{-1}$. Since $\wh f$ is supported in the unit ball centered at the origin, $f$ is locally constant in $B(0,1)$. Therefore,
$$ \|f\|_p\ge \|f\|_{L^p(B(0,1))}\gtrsim R^{-1}. $$
We consider the right hand side of \eqref{ineqparabola}. By definition, for each $\ga\in\Ga_\alpha(R^{-1})$, $\wh f_\ga$ is roughly a bump function supported in $2\ga$. Let $\ga^*$ be the dual rectangle of $\ga$ which has dimensions $R^\alpha\times R$ and is centered at the origin. By an application of integration by parts and by ignoring the tails, we assume 
$$ f_\ga= \frac{1}{|\ga^*|}\Id_{\ga^*}. $$
Here, ``$\approx$" means up to a $C_\e R^\e$ factor for any $\e>0$. We will use the same notation throughout the paper.

We see that
$$ \Big\|(\sum\limits_{\ga\in\Ga_\alpha(R^{-1})}|f_\ga|^2)^{1/2}\Big\|_{L^p(\R^2)}^p\sim R^{-(1+\alpha)p}\int_{B(0,R)} (\sum_\ga \Id_{\ga^*})^{p/2}. $$
We evaluate the integral above. There are two extreme regions: $B(0,R^\alpha)$ where all the $\{\ga^*\}$ overlap; $B(0,R)\setminus B(0,R/2)$ where $\{\ga^*\}$ is $ O(R^{2\alpha-1})$-overlapping. For the intermediate region $B(0,r)\setminus B(0,r/2)$ ($R^\alpha\le r\le R$), we see that $\{\ga^*\}$ is $ O(r^{-1}R^{2\alpha})$-overlapping. We may find a dyadic radius $r$ such that
\begin{align*}
    \int (\sum_\ga \Id_{\ga^*})^{p/2}&\approx  \int_{B(0,r)\setminus B(0,r/2)} (\sum_\ga \Id_{\ga^*})^{p/2}\lesssim (r^{-1}R^{2\alpha})^{p/2}|B(0,r)|\sim
    r^{2-\frac{p}{2}}R^{\alpha p}.
\end{align*} 
Since $p\ge 4\alpha+2\ge 4$, the expression above is maximized when $r=R^\alpha$. Plugging in, we obtain
$$ \int (\sum_\ga \Id_{\ga^*})^{p/2}\lessapprox R^{\alpha(2+\frac{p}{2})}.$$
Plugging into \eqref{ineqparabola}, we have
$$ R^{-1}\lessapprox C_{\alpha,p}(R)R^{-(1+\alpha)} R^{\alpha(\frac{2}{p}+\frac{1}{2})}, $$
which gives
$$ C_{\alpha,p}(R)\gtrapprox  R^{\alpha(\frac{1}{2}-\frac{2}{p})}.$$

\medskip

\noindent\fbox{Case 2: $2\le p\le 4\alpha+2$}

We introduce the flat example. Let $\theta\subset N_{R^{-1}}(\cP)$ be a $R^{-1/2}\times R^{-1}$-cap. Choose $f$ such that $\wh f(\xi)=\psi_\theta(\xi)$, where $\psi_\theta$ is a smooth bump function supported in $N_{R^{-1}}(\cP)$. Let $\theta^*$ be the dual rectangle of $\theta$ which has dimensions $R^{1/2}\times R$ and is centered at the origin. By the locally constant property, $f$ is an $L^1$ normalized function essentially supported in $\theta^*$. By ignoring the tails, we assume
$$ f= \frac{1}{|\theta^*|}\Id_{\theta^*}. $$
We see that
$$ \|f\|_p\sim R^{-\frac{3}{2}}R^{\frac{3}{2p}}. $$

We consider the right hand side of \eqref{ineqparabola}. By the same reasoning as in Case 1, for each $\ga\in\Ga_\alpha(R^{-1})$ with $\ga\subset \theta$, we know that $\wh f_\ga$ is roughly a bump function supported in $2\ga$.
Therefore, we can assume
$$ f_\ga= \frac{1}{|\ga^*|}\Id_{\ga^*}. $$
We also note that $\ga_1^*$ and $\ga_2^*$ are comparable when $\ga_1,\ga_2\subset \theta$.
We have
\begin{align*}
    \Big\|(\sum\limits_{\ga\in\Ga_\alpha(R^{-1})}|f_\ga|^2)^{1/2}\Big\|_{L^p(\R^2)}&\sim R^{-(1+\alpha)}\bigg(\int (\sum_{\ga\subset \theta} \Id_{\ga^*})^{p/2}\bigg)^{1/p}\sim R^{-(1+\alpha)}\#\{\ga\subset\theta\}^{1/2}|\ga^*|^{1/p} \\
    &\sim R^{-(1+\alpha)} R^{\frac{1}{2}(\alpha-\frac{1}{2})}R^{\frac{1+\alpha}{p}}.
\end{align*} 

Plugging into \eqref{ineqparabola}, we have
$$ R^{-\frac{3}{2}}R^{\frac{3}{2p}}\lessapprox C_{\alpha,p}(R)R^{-(1+\alpha)} R^{\frac{1}{2}(\alpha-\frac{1}{2})}R^{\frac{1+\alpha}{p}}, $$
which gives
$$ C_{\alpha,p}(R)\gtrapprox  R^{(\alpha-\frac{1}{2})(\frac{1}{2}-\frac{1}{p})}.$$

\subsection{Proof of Theorem \ref{thmparabola}}
By the standard localization argument, it suffices to prove
$$ \|f\|_{L^p(B_R)}\lessapprox_\e
    (R^{\alpha(\frac{1}{2}-\frac{2}{p})}+R^{(\alpha-\frac{1}{2})(\frac{1}{2}-\frac{1}{p})})\Big\|(\sum\limits_{\ga\in\Ga_\alpha(R^{-1})}|f_\ga|^2)^{1/2}\Big\|_p. $$
We introduce some notations. Throughout the proof, we use $\ga$ to denote caps of dimensions $R^{-\alpha}\times R^{-1}$. For $R^{-1/2}\le \De\le 1$, we will consider caps $\tau$ of length $\De$ and thickness $R^{-1}$. We write $|\tau|=\De$ to indicate the length of $\tau$. We will also partition the region $B_R$ into rectangles of dimensions $R^\alpha\times R$. For simplicity, we denote these rectangles by $B_{R^\alpha\times R}.$ The longest direction of $B_{R^\alpha\times R}$ will be specified in the proof.

Let $K\sim \log R$ and let $m\in\N$ be such that $K^m=R^{1/2}$. By doing the broad-narrow reduction as in \cite[Section 5.1]{demeter2020small} , we have
\begin{align}
    \label{par1}\|f\|_{L^p(B_R)}^p&\lesssim C^m\sum_{|\theta|=R^{-1/2}}\|f_\theta\|_{L^p(B_R)}^p\\
    \label{par2}&+C^m K^C \sum_{\begin{subarray}{c}
        R^{-1/2}\le \De\le 1\\
        \De \text{~dyadic}
    \end{subarray}} \sum_{|\tau|\sim \De} \max_{\begin{subarray}{c}
    \tau_1,\tau_2\subset \tau\\
    |\tau_1|=|\tau_2|=K^{-1}\De\\
    \dist(\tau_1,\tau_2)\ge (10K)^{-1}\De
    \end{subarray}
    }\|(f_{\tau_1}f_{\tau_2})^{1/2}\|_{L^p(B_R)}^p.
\end{align} 
Note that $C^mK^C\lesssim_\e R^\e$, for each $\e>0$.

We first estimate the right hand side of \eqref{par1}.
\begin{lemma}\label{lemnarrow}
Let $\theta$ be a cap of length $R^{-1/2}$. Then,
$$ \|f_\theta\|_{L^p(B_R)}\lesssim R^{(\alpha-\frac{1}{2})(\frac{1}{2}-\frac{1}{p})}\|(\sum_{\ga\subset \theta}|f_\ga|^2)^{1/2}\|_p. $$
\end{lemma}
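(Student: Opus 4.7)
The plan is to combine local orthogonality with the locally constant property, applied at the common dual-rectangle scale. Since $\theta$ is an $R^{-1/2}\times R^{-1}$ cap and each $\ga\subset\theta$ has dimensions $R^{-\alpha}\times R^{-1}$ with $\alpha\ge 1/2$, all of the $\ga$ share a common orientation, so their duals $\ga^*$ are translates of a single rectangle $U$ of dimensions $R^{\alpha}\times R$, and $\theta^*$ is a rectangle $T$ of dimensions $R^{1/2}\times R$ with the same orientation, $T\subset U$ and $|U|/|T|=R^{\alpha-1/2}$. Tile $\R^2$ by translates of $U$.

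On each tile $U$, since $U^*$ has dimensions comparable to $\ga$, the sets $\{\supp\wh f_\ga+U^*\}_{\ga\subset\theta}$ are finitely overlapping, so local orthogonality gives
\[ \int_U|f_\theta|^2\lesim\int_U\sum_{\ga\subset\theta}|f_\ga|^2. \]
Next, since $\wh f_\theta\subset\theta$, the function $|f_\theta|$ is essentially constant on each translate of $T$. Partition $U$ into $\sim R^{\alpha-1/2}$ such translates $\{T_j\}$ and set $c_j:=|f_\theta|\!\restriction_{T_j}$. For $p\ge 2$ the elementary inequality $\sum_j c_j^p\le(\sum_j c_j^2)^{p/2}$ (which follows from $\max_j c_j\le(\sum_j c_j^2)^{1/2}$) combines with the local orthogonality to give
\[ \int_U|f_\theta|^p\approx|T|\sum_j c_j^p \le |T|^{1-p/2}\Big(\int_U|f_\theta|^2\Big)^{p/2}\lesim|T|^{1-p/2}\Big(\int_U\sum_{\ga\subset\theta}|f_\ga|^2\Big)^{p/2}. \]

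Finally, sum over the tiles $U$ and use Jensen's inequality for the convex function $x\mapsto x^{p/2}$ in the form $(\int_U h)^{p/2}\le|U|^{p/2-1}\int_U h^{p/2}$ with $h=\sum_{\ga\subset\theta}|f_\ga|^2$, to obtain
\[ \|f_\theta\|_{L^p(B_R)}^p\lesim|T|^{1-p/2}|U|^{p/2-1}\Big\|(\sum_{\ga\subset\theta}|f_\ga|^2)^{1/2}\Big\|_p^p=R^{(\alpha-1/2)(p/2-1)}\Big\|(\sum_{\ga\subset\theta}|f_\ga|^2)^{1/2}\Big\|_p^p, \]
and the $p$-th root is the claimed bound. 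The only mild point requiring care is the rapid-decay tails in the locally constant and local orthogonality steps, which the paper has already declared harmless; the main conceptual input is simply that the two relevant scales are $T$ (dual to $\theta$, on which $f_\theta$ is locally constant) and $U$ (dual to $\ga$, on which local orthogonality can be invoked), and the loss $R^{(\alpha-1/2)(1/2-1/p)}$ is exactly the ratio $(|U|/|T|)^{1/2-1/p}$ charged by the Hölder/Jensen conversion between them.
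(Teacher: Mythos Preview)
Your proof is correct and follows essentially the same approach as the paper: both arguments tile by translates of $U=\ga^*$ (the $R^\alpha\times R$ scale on which local orthogonality of the $f_\ga$ holds), sub-tile by translates of $T=\theta^*$ (the $R^{1/2}\times R$ scale on which $|f_\theta|$ is locally constant), use the embedding $\ell^p\hookrightarrow\ell^2$ to pass from $L^p$ to $L^2$ on $U$, apply local orthogonality, and then H\"older/Jensen to return to $L^p$. The only difference is the order in which you state the steps and your slightly more explicit bookkeeping of the factor $(|U|/|T|)^{1/2-1/p}$.
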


\begin{proof}
We partition $B_R$ into $B_{R^\alpha\times R}$, where each $B_{R^\alpha\times R}$ is a translation of $\ga^*$ for $\ga\subset \theta$ (note that for all $\ga\subset\theta$, $\ga^*$'s are comparable). It suffices to prove for any $B_{R^\alpha\times R}$,
\begin{equation}\label{lem1suffice}
    \|f_\theta\|_{L^p(B_{R^\alpha\times R})}\lesssim R^{(\alpha-\frac{1}{2})(\frac{1}{2}-\frac{1}{p})}\|(\sum_{\ga\subset \theta}|f_\ga|^2)^{1/2}\|_{L^p(\om_{B_{R^\alpha\times R}})}.
\end{equation}
Here, $\om_{B_{R^\alpha\times R}}$ is a weight which $=1$ on $B_{R^\alpha\times R}$ and decays rapidly outside $B_{R^\alpha\times R}$. And $\|g\|_{L^p(\om)}$ is defined to be $(\int |g|^p\om)^{1/p}$. We remark that we use $\om_{B_{R^\alpha\times R}}$ instead of $\Id_{B_{R^\alpha\times R}}$ is to make the local orthogonality and locally constant property rigorous. As such technicality is well-known (see for example in \cite{demeter2020fourier}), we will just pretend $\om_{B_{R^\alpha\times R}}=\Id_{B_{R^\alpha\times R}}$ for convenience.

We further do the partition
$$ B_{R^\alpha\times R}=\bigsqcup B_{R^{1/2}\times R}, $$
where each $B_{R^{1/2}\times R}$ is a translation of $\theta^*$. Since $f_\theta$ is locally constant on each $B_{R^{1/2}\times R}$, we have
\begin{align*}
    \|f_\theta\|_{L^p(B_{R^\alpha\times R})}&=\bigg( \sum_{B_{R^{1/2}\times R}}\|f_\theta\|_{L^p(B_{R^{1/2}\times R})}^p \bigg)^{1/p}\\
    &\lesssim R^{\frac{3}{2}(\frac{1}{p}-\frac{1}{2})}\bigg( \sum_{B_{R^{1/2}\times R}}\|f_\theta\|_{L^2(B_{R^{1/2}\times R})}^p \bigg)^{1/p}\\\
    &\le R^{\frac{3}{2}(\frac{1}{p}-\frac{1}{2})}\|f_\theta\|_{L^2(B_{R^\alpha\times R})}.
\end{align*} 
By local orthogonality, H\"older's inequality and noting $p\ge 2$, we have
$$ \|f_\theta\|_{L^2(B_{R^\alpha\times R})}\lesssim \|(\sum_{\ga\subset\theta}|f_\ga|^2)^{1/2}\|_{L^2(B_{R^\alpha\times R})}\le R^{(1+\alpha)(\frac{1}{2}-\frac{1}{p})} \|(\sum_{\ga\subset\theta}|f_\ga|^2)^{1/2}\|_{L^p(B_{R^\alpha\times R})}.$$
Combining the inequalities, we finish the proof of \eqref{lem1suffice}.
\end{proof}
By Lemma \ref{lemnarrow}, the right hand side of \eqref{par1} is bounded by
$$ R^\e R^{(\alpha-\frac{1}{2})(\frac{1}{2}-\frac{1}{p})}\bigg(\sum_\theta\|(\sum_{\ga\subset \theta}|f_\ga|^2)^{1/2}\|_p^p\bigg)^{1/p}\le  C_{\alpha,p}(R)\|(\sum_{\ga}|f_\ga|^2)^{1/2}\|_p.$$

\medskip

Next, we estimate \eqref{par2}. For any summand in \eqref{par2}, we will show that
\begin{equation}\label{toshow}
    \|(f_{\tau_1}f_{\tau_2})^{1/2}\|_{L^p(B_R)}\lesim C_{\alpha,p}(R) \|(\sum_{\ga\subset \tau}|f_\ga|^2)^{1/2}\|_p.
\end{equation} 
This will imply \eqref{par2}$^{\frac{1}{p}}$ $\lessapprox C_{\alpha,p}(R)\|(\sum\limits_{\ga
}|f_\ga|^2)^{1/2}\|_p$, and then finishes the proof of Theorem \ref{thmparabola}. It remains to prove \eqref{toshow}.

Fix a $\De\in [R^{-1/2},1]$ and a $\tau$ with $|\tau|=\De$. We first consider $\bigcap_{\ga\subset \tau} \ga^*$. It is easy to see $\bigcap_{\ga\subset \tau} \ga^*$ is an $R^\alpha\times R^\alpha \De^{-1}$-rectangle when $\De\ge R^{\alpha-1}$; $\bigcap_{\ga\subset \tau} \ga^*$ is an $R^\alpha\times R$-rectangle when $\De\le R^{\alpha-1}$. We consider these two cases separately.

\medskip

\noindent\fbox{Case 1: $\De\ge R^{\alpha-1}$}

We choose a partition 
 $B_R=\bigsqcup B_{R^{\alpha}\times R^\alpha\De^{-1}}$, 
where each $B_{R^{\alpha}\times R^\alpha\De^{-1}}$ is a translation of $\bigcap_{\ga\subset\tau}\ga^*$. We just need to show
\begin{equation}\label{toshow1}
    \|(f_{\tau_1}f_{\tau_2})^{1/2}\|_{L^p(B_{R^{\alpha}\times R^\alpha\De^{-1}})}\lesim C_{\alpha,p}(R) \|(\sum_{\ga\subset \tau}|f_\ga|^2)^{1/2}\|_{L^p(B_{R^{\alpha}\times R^\alpha\De^{-1}})}.
\end{equation} 
Since each $|f_\ga|$ is locally constant on $B_{R^{\alpha}\times R^\alpha\De^{-1}}$ when $\ga\subset \tau$, we have
$$ \|(\sum_{\ga\subset \tau}|f_\ga|^2)^{1/2}\|_{L^p(B_{R^{\alpha}\times R^\alpha\De^{-1}})}\sim (R^{2\alpha}\De^{-1})^{-\frac{1}{2}+\frac{1}{p}}\|(\sum_{\ga\subset \tau}|f_\ga|^2)^{1/2}\|_{L^2(B_{R^{\alpha}\times R^\alpha\De^{-1}})}. $$
Since $\{f_\ga\}_{\ga\subset\tau}$ are locally orthogonal on $B_{R^{\alpha}\times R^\alpha\De^{-1}}$, we have
$$ \|(\sum_{\ga\subset \tau}|f_\ga|^2)^{1/2}\|_{L^2(B_{R^{\alpha}\times R^\alpha\De^{-1}})}\sim \|f_\tau\|_{L^2(B_{R^{\alpha}\times R^\alpha\De^{-1}})}. $$
Therefore, \eqref{toshow1} is reduced to 
\begin{equation}\label{toshow2}
    \|(f_{\tau_1}f_{\tau_2})^{1/2}\|_{L^p(B_{R^{\alpha}\times R^\alpha\De^{-1}})}\lesim C_{\alpha,p}(R) (R^{2\alpha}\De^{-1})^{-\frac{1}{2}+\frac{1}{p}} \|f_\tau\|_{L^2(B_{R^{\alpha}\times R^\alpha\De^{-1}})}.
\end{equation} 

Next, we apply the parabolic rescaling. Recall that $\tau$ is a cap of length $\De$. We dilate by factor $\De^{-1}$ in the tangent direction of $\tau$ and dilate by factor $\De^{-2}$ in the normal direction of $\tau$. Under the rescaling, we see that: $\tau$ becomes the $R^{-1}\De^{-2}$-neighborhood of $\cP$; $\tau_1$ and $\tau_2$ become $K^{-1}$-separated caps with length $K^{-1}$ and thickness $R^{-1}\De^{-2}$;
the rectangle $B_{R^{\alpha}\times R^\alpha\De^{-1}}$ in the physical space becomes $B_{R^\alpha \De}$.
Let $g,g_1,g_2$ be the rescaled version of $f_\tau,f_{\tau_1},f_{\tau_2}$ respectively.
The inequality \eqref{toshow2} becomes
\begin{equation}\label{toshow3}
    \|(g_1 g_2)^{1/2}\|_{L^p(B_{R^{\alpha}\De})}\lesim C_{\alpha,p}(R) (R^{2\alpha}\De^{-1})^{-\frac{1}{2}+\frac{1}{p}} \De^{3(-\frac{1}{2}+\frac{1}{p})}\|g\|_{L^2(B_{R^{\alpha}\De})}.
\end{equation}
We recall the following bilinear restriction estimate (see for example in \cite{tao2003sharp}).
\begin{lemma}\label{birestriction}
Let $r>1$, $K>1$. Suppose $g_1, g_2$ satisfy $\supp \wh g_1, \supp\wh g_2\subset N_{r^{-2}}(\cP)$ and $\dist(\supp \wh g_1, \supp \wh g_2)>K^{-1}$. Then for $p\ge 2$ and $r'\ge r$ we have
\begin{equation}
   \|(g_1g_2)^{1/2}\|_{L^p(B_{r'})}\lesssim K^{O(1)}r^{\frac{2}{p}-1}\big(\|g_1\|_{L^2(B_{r'})}\|g_2\|_{L^2(B_{r'})}\big)^{1/2}.  
\end{equation}
\end{lemma}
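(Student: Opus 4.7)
The plan is to deduce Lemma \ref{birestriction} from two endpoint bounds together with log-convexity of $L^p$ norms on $B_{r'}$. A point worth noting up front is that the bilinear separation hypothesis $\dist(\supp\wh g_1,\supp\wh g_2)>K^{-1}$ is not actually needed to obtain the stated conclusion -- the factor $K^{O(1)}$ can be absorbed as $1$. The separation only becomes essential in sharper versions, e.g.\ a C\'ordoba--Fefferman style $L^4$ estimate $\|g_1 g_2\|_{L^2}\lesim r^{-1}\|g_1\|_2\|g_2\|_2$ that would use the transversality of the two pieces of $\cP$ through a Jacobian computation on the map $(\xi,\xi')\mapsto(\xi+\xi',\xi^2+\xi'^2)$. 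For Lemma \ref{birestriction} as written, the single Fourier support condition $\supp\wh g_i\subset N_{r^{-2}}(\cP)$ already suffices.

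For the $L^2$ endpoint ($p=2$), Cauchy--Schwarz gives
\[
\|(g_1 g_2)^{1/2}\|_{L^2(B_{r'})}^2=\int_{B_{r'}}|g_1||g_2|\le \|g_1\|_{L^2(B_{r'})}\|g_2\|_{L^2(B_{r'})},
\]
matching the claim since $r^{2/p-1}=1$ at $p=2$. For the $L^\infty$ endpoint I would invoke Bernstein, i.e.\ the locally constant principle at the dual scale of $N_{r^{-2}}(\cP)$. Since $|N_{r^{-2}}(\cP)|\sim r^{-2}$, Cauchy--Schwarz in frequency gives $\|g_i\|_{L^\infty}\le\|\wh g_i\|_{L^1}\le |N_{r^{-2}}(\cP)|^{1/2}\|\wh g_i\|_{L^2}\lesim r^{-1}\|g_i\|_{L^2}$. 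The localized version $\|g_i\|_{L^\infty(B_{r'})}\lesim r^{-1}\|g_i\|_{L^2(\om_{B_{r'}})}$ follows from the locally constant weight formalism already used in the paper, and by the convention of treating $\om_{B_{r'}}$ as $\Id_{B_{r'}}$, we obtain
\[
\|(g_1 g_2)^{1/2}\|_{L^\infty(B_{r'})}\lesim r^{-1}\bigl(\|g_1\|_{L^2(B_{r'})}\|g_2\|_{L^2(B_{r'})}\bigr)^{1/2}.
\]

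The final step is the standard log-convexity bound $\|h\|_{L^p(B_{r'})}\le \|h\|_{L^2(B_{r'})}^{2/p}\|h\|_{L^\infty(B_{r'})}^{1-2/p}$ for $p\in[2,\infty]$. Applied to $h=(g_1 g_2)^{1/2}$ with $A=(\|g_1\|_2\|g_2\|_2)^{1/2}$ and $B\lesim r^{-1}A$, this yields
\[
\|(g_1 g_2)^{1/2}\|_{L^p(B_{r'})}\lesim r^{-(1-2/p)}\bigl(\|g_1\|_{L^2(B_{r'})}\|g_2\|_{L^2(B_{r'})}\bigr)^{1/2}=r^{2/p-1}\bigl(\|g_1\|_{L^2(B_{r'})}\|g_2\|_{L^2(B_{r'})}\bigr)^{1/2},
\]
which is the claim. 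The only technical issue, and the main obstacle such as it is, is the bookkeeping needed to pass from the global Bernstein bound to its local form on $B_{r'}$; this is handled entirely by the weight functions $\om_{B_{r'}}$ and the rapidly decaying tails that the paper has already committed to ignoring. In particular, no Whitney decomposition, wave-packet analysis, or bilinear-to-linear reduction is needed for this specific statement; those tools would enter only if one wanted to improve the exponent or extract explicit $K$-gain, neither of which is used in the subsequent application to \eqref{toshow3}.
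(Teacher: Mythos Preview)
Your proof is correct and follows essentially the same endpoint-plus-interpolation strategy as the paper. The only difference is that the paper also records the $p=4$ case separately, invoking the bilinear restriction estimate there, and then says to interpolate via H\"older between $p=2,4,\infty$. You observe, rightly, that the $p=4$ stop is redundant: log-convexity between $p=2$ (constant $1$) and $p=\infty$ (constant $\sim r^{-1}$) already produces $r^{2/p-1}$ for every $p\ge 2$, so the bilinear restriction input is not needed for the lemma as stated, and consequently neither is the separation hypothesis $\dist(\supp\wh g_1,\supp\wh g_2)>K^{-1}$ nor the $K^{O(1)}$ factor. Your version is thus a mild streamlining of the paper's argument; the bilinear restriction would only be relevant if one wanted a genuinely stronger bound than $r^{2/p-1}$ at intermediate $p$, which is not what the lemma claims.
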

\begin{proof}
We just need to prove for $r'=r$.
When $p=2$, this is trivial. When $p=4$, this is the bilinear restriction estimate. When $p=\infty$, we note that 
\begin{align*}
    \|(g_1g_2)^{1/2}\|_{L^\infty(B_r)}^2&\le \|g_1\|_{L^\infty(B_r)}\|g_2\|_{L^\infty(B_r)}\le \|\wh g_1\|_{L^1}\|\wh g_2\|_{L^1}\\
    &\lesim r^{-2}\|\wh g_1\|_{L^2}\|\wh g_1\|_{L^2}=r^{-2}\|g_1\|_{L^2}\|g_2\|_{L^2}.
\end{align*} 
The second-last inequality is by H\"older and the condition on the support of $\wh g_1,\wh g_2$. The last inequality is by Plancherel.
For other $p$, the proof is by using H\"older to interpolate between $p=2,4,\infty$.
\end{proof}

We return to \eqref{toshow3}. Noting that $R^\alpha \De\ge (R\De^2)^{1/2}$, we apply the lemma above to bound the left hand side of \eqref{toshow3} by $(R^\alpha \De)^{\frac{2}{p}-1}\|g\|_{L^2(B_{R^\alpha \De})}$. It suffices to prove
\begin{equation}\label{toshow4}
    (R^\alpha \De)^{\frac{2}{p}-1}\lesim C_{\alpha,p}(R) (R^{2\alpha}\De^{-1})^{-\frac{1}{2}+\frac{1}{p}} \De^{3(-\frac{1}{2}+\frac{1}{p})}.
\end{equation} 

When $p\ge 4$, we use $C_{\alpha,p}(R)\gtrsim R^{\alpha(\frac{1}{2}-\frac{2}{p})}$. Then \eqref{toshow4} boils down to
\begin{equation}
    (R^\alpha \De)^{\frac{2}{p}-1}\lesim R^{\alpha(\frac{1}{2}-\frac{2}{p})} (R^{2\alpha}\De^{-1})^{-\frac{1}{2}+\frac{1}{p}} \De^{3(-\frac{1}{2}+\frac{1}{p})},
\end{equation} 
which is equivalent to
$$ R^{\alpha(\frac{1}{2}-\frac{2}{p})}\gtrsim 1, $$
which is true since $R\ge 1$.

When $p\le 4$, we use $C_{\alpha,p}(R)\gtrsim R^{(\alpha-\frac{1}{2})(\frac{1}{2}-\frac{1}{p})}$.
Then \eqref{toshow4} boils down to
\begin{equation}
    (R^\alpha \De)^{\frac{2}{p}-1}\lesim R^{(\alpha-\frac{1}{2})(\frac{1}{2}-\frac{1}{p})} (R^{2\alpha}\De^{-1})^{-\frac{1}{2}+\frac{1}{p}} \De^{3(-\frac{1}{2}+\frac{1}{p})},
\end{equation} 
which is equivalent to
$$ R^{(\alpha-\frac{1}{2})(\frac{1}{2}-\frac{1}{p})}\gtrsim 1, $$
which is true since $\alpha\ge 1/2.$

\medskip

\noindent\fbox{Case 2: $\De\le R^{\alpha-1}$}

We choose a partition 
 $B_R=\bigsqcup B_{R^{\alpha}\times R}$, 
where each $B_{R^{\alpha}\times R}$ is a translation of $\bigcap_{\ga\subset\tau}\ga^*$. We just need to show
\begin{equation}\label{toshow11}
    \|(f_{\tau_1}f_{\tau_2})^{1/2}\|_{L^p(B_{R^{\alpha}\times R})}\lesim C_{\alpha,p}(R) \|(\sum_{\ga\subset \tau}|f_\ga|^2)^{1/2}\|_{L^p(B_{R^{\alpha}\times R})}.
\end{equation} 
Since each $|f_\ga|$ is locally constant on $B_{R^{\alpha}\times R}$ when $\ga\subset \tau$, we have
$$ \|(\sum_{\ga\subset \tau}|f_\ga|^2)^{1/2}\|_{L^p(B_{R^{\alpha}\times R})}\sim (R^{\alpha+1})^{-\frac{1}{2}+\frac{1}{p}}\|(\sum_{\ga\subset \tau}|f_\ga|^2)^{1/2}\|_{L^2(B_{R^{\alpha}\times R})}. $$
Since $\{f_\ga\}_{\ga\subset\tau}$ are locally orthogonal on $B_{R^{\alpha}\times R}$, we have
$$ \|(\sum_{\ga\subset \tau}|f_\ga|^2)^{1/2}\|_{L^2(B_{R^{\alpha}\times R})}\sim \|f_\tau\|_{L^2(B_{R^{\alpha}\times R})}. $$
Therefore, \eqref{toshow11} is reduced to 
\begin{equation}\label{toshow12}
    \|(f_{\tau_1}f_{\tau_2})^{1/2}\|_{L^p(B_{R^{\alpha}\times R})}\lesim C_{\alpha,p}(R) (R^{\alpha+1})^{-\frac{1}{2}+\frac{1}{p}} \|f_\tau\|_{L^2(B_{R^{\alpha}\times R})}.
\end{equation} 

Next, we do the same parabolic rescaling as above. The rectangle $B_{R^{\alpha}\times R}$ in the physical space becomes $B_{R^\alpha \De\times R\De^2}$.
Let $g,g_1,g_2$ be the rescaled version of $f_\tau,f_{\tau_1},f_{\tau_2}$ respectively.
The inequality \eqref{toshow12} becomes
\begin{equation}\label{toshow13}
    \|(g_1 g_2)^{1/2}\|_{L^p(B_{R^{\alpha}\De\times R\De^2})}\lesim C_{\alpha,p}(R) (R^{\alpha+1})^{-\frac{1}{2}+\frac{1}{p}} \De^{3(-\frac{1}{2}+\frac{1}{p})}\|g\|_{L^2(B_{R^{\alpha}\De\times R\De^2})}.
\end{equation}

To apply Lemma \ref{birestriction}, we do the partition $B_{R^\alpha\De\times R\De^2}=\bigsqcup B_{R\De^2}$. So, \eqref{toshow13} is reduced to 
\begin{equation}\label{toshow13.5}
    \|(g_1 g_2)^{1/2}\|_{L^p(B_{R\De^2})}\lesim C_{\alpha,p}(R) (R^{\alpha+1})^{-\frac{1}{2}+\frac{1}{p}} \De^{3(-\frac{1}{2}+\frac{1}{p})}\|g\|_{L^2(B_{R\De^2})}.
\end{equation}
By Lemma \ref{birestriction},  
$$\|(g_1 g_2)^{1/2}\|_{L^p(B_{R\De^2})}\lesim (R \De^2)^{\frac{2}{p}-1}\|g\|_{L^2(B_{R\De^2})}.$$ It suffices to prove
\begin{equation}\label{toshow14}
    (R \De^2)^{\frac{2}{p}-1}\lesim C_{\alpha,p}(R) R^{(\alpha+1)(-\frac{1}{2}+\frac{1}{p})} \De^{3(-\frac{1}{2}+\frac{1}{p})}.
\end{equation} 

When $p\ge 4\alpha+2$, we use $C_{\alpha,p}(R)\gtrsim R^{\alpha(\frac{1}{2}-\frac{2}{p})}$. Then \eqref{toshow14} boils down to
\begin{equation}
    (R\De^2)^{\frac{2}{p}-1}\lesim R^{\alpha(\frac{1}{2}-\frac{2}{p})} R^{(\alpha+1)(-\frac{1}{2}+\frac{1}{p})} \De^{3(-\frac{1}{2}+\frac{1}{p})},
\end{equation} 
which is equivalent to
$$ \De^{\frac{1}{p}-\frac{1}{2}}\lesim R^{-\frac{\alpha}{p}+\frac{1}{2}-\frac{1}{p}}. $$
Using $\De\ge R^{-\frac{1}{2}}$, we just need to prove
$$ R^{-\frac{1}{2p}+\frac{1}{4}}\lesim R^{-\frac{\alpha}{p}+\frac{1}{2}-\frac{1}{p}}. $$
The last inequality is equivalent to $\frac14-\frac{1}{2p}-\frac\alpha p\ge 0$, which is further equivalent to $p\ge 4\alpha+2$. We also remark that this is the place where the critical exponent $p=4\alpha+2$ appears.

When $2\le p\le 4\alpha+2$, we use $C_{\alpha,p}(R)\gtrsim R^{(\alpha-\frac{1}{2})(\frac{1}{2}-\frac{1}{p})}$.
Then \eqref{toshow14} boils down to
\begin{equation}
    (R\De^2)^{\frac{2}{p}-1}\lesim R^{(\alpha-\frac{1}{2})(\frac{1}{2}-\frac{1}{p})} R^{(\alpha+1)(-\frac{1}{2}+\frac{1}{p})} \De^{3(-\frac{1}{2}+\frac{1}{p})},
\end{equation} 
which is equivalent to
$$ \De^{\frac{1}{p}-\frac{1}{2}}\lesim R^{\frac{1}{4}-\frac{1}{2p}}, $$
which is true since $\De^{-1}\le R^{1/2}.$

The proof of Theorem \ref{thmparabola} is finished.

\newpage





\section{Small cap square function estimate for cone}

We prove Theorem \ref{thmvcone} in this section. We begin with the sharp examples.

\subsection{Sharp examples}\hfill

\smallskip

Choose $f$ such that $\wh f=\psi_{N_{R^{-1}}(\cC)}(\xi)$, where $\psi_{N_{R^{-1}}(\cC)}(\xi)$ is a smooth bump function supported in $N_{R^{-1}}(\cC)$. We are going to calculate the lower bound of $\|f\|_{p}$, which is the left hand side of \eqref{ineqvcone}. We see that $f(0)=\int \wh f(\xi) d\xi\sim R^{-1}$. Since $\wh f$ is supported in the unit ball centered at the origin, $f$ is locally constant in $B(0,1)$. Therefore,
\begin{equation}\label{combine1}
    \|f\|_p\gtrsim \|f\|_{L^p(B(0,1))}\gtrsim R^{-1}. 
\end{equation} 
We also estimate the integral of $f$ in the region $\{|x|\sim R\}$. We first do a canonical partition of $N_{R^{-1}}(\cC)$ into $1\times R^{-1/2}\times R^{-1}$-planks, denoted by
\[ N_{R^{-1}}(\cC)=\bigsqcup \theta. \]
Then we can write $f=\sum_\theta f_\theta$, such that each $\wh f_\theta$ is a smooth bump function on $\theta$. Let $\theta^*$ be the dual rectangle of $\theta$, so $\theta^*$ has size $1\times R^{1/2}\times R$ and is centered at the origin. By an application of integration by parts, we can assume
\[ |f_\theta|= \frac{1}{|\theta^*|}\Id_{\theta^*}=R^{-3/2}\Id_{\theta^*}. \]
Now the key observation is that $\{\theta^*\}$ are disjoint in $B(0,R)\setminus B(0,\frac{9}{10}R)$, so we see that
\begin{align*}
    \|f\|_p=\|\sum_\theta f_\theta\|_p&\ge \|\sum_\theta f_\theta\|_{L^p(B(0,R)\setminus B(0,\frac{9}{10}R))}\\
    &\sim R^{-3/2}\| \sum_\theta \Id_{\theta^*} \|_{L^p(B(0,R)\setminus B(0,\frac{9}{10}R))}\\
    &\sim R^{-3/2}(\sum_\theta |\theta^*| )^{1/p}=R^{-\frac{3}{2}+\frac{2}{p}}.
\end{align*} 
Combining with \eqref{combine1}, we see
\begin{equation}\label{lowbound}
    \|f\|_p\gtrsim \max\{ R^{-1}, R^{-\frac{3}{2}+\frac{2}{p}} \}.
\end{equation}
And we see the threshold for these two lower bounds to be equal is at $p=4$.

\medskip

For this same $f$, we will estimate the upper bound of the right hand side of \eqref{ineqvcone}. Recall that $\ga$ is a $1\times R^{-\beta}\times R^{-1}$-cap contained in $N_{R^{-1}}(\cC)$, and by definition $\wh f_\ga=\psi_\ga \wh f$. Therefore, $\wh f_\ga$ is a smooth bump function adapted to $\ga$. By an application of integration by parts, we can assume
\[ |f_\ga|= \frac{1}{|\ga^*|}\Id_{\ga^*}. \]
Here, the dual rectangle $\ga^*$ is centered at the origin with size $1\times R^\beta\times R$.
See Figure \ref{fig3}: the rectangle on the left hand side is $\ga$; the rectangle on the right hand side is $\ga^*$.

Therefore, we can write
\begin{equation}\label{upbound}
    \|(\sum_{\ga\in\Ga_\beta(R^{-1})}|f_\ga|^2)^{1/2}\|_p\sim R^{-1-\beta} \big(\int(\sum_\ga \Id_{\ga^*})^{p/2}\big)^{1/p}. 
\end{equation} 

\newpage

\begin{figure}
    \centering
    
\begin{tikzpicture}[x=0.75pt,y=0.75pt,yscale=-1,xscale=1]

\draw   (46,107) .. controls (46,95.95) and (91.67,87) .. (148,87) .. controls (204.33,87) and (250,95.95) .. (250,107) .. controls (250,118.05) and (204.33,127) .. (148,127) .. controls (91.67,127) and (46,118.05) .. (46,107) -- cycle ;
\draw   (117,203.25) .. controls (117,198.97) and (132.67,195.5) .. (152,195.5) .. controls (171.33,195.5) and (187,198.97) .. (187,203.25) .. controls (187,207.53) and (171.33,211) .. (152,211) .. controls (132.67,211) and (117,207.53) .. (117,203.25) -- cycle ;
\draw    (46,107) -- (117,203.25) ;
\draw    (250,107) -- (187,203.25) ;
\draw   (163.16,89.1) -- (169.26,84.16) -- (189.92,86.32) -- (178.75,192.73) -- (172.65,197.67) -- (152,195.5) -- cycle ; \draw   (189.92,86.32) -- (183.82,91.26) -- (163.16,89.1) ; \draw   (183.82,91.26) -- (172.65,197.67) ;
\draw   (516.64,207.13) -- (464.93,202.89) -- (369.85,90.82) -- (382.04,80.48) -- (433.76,84.73) -- (528.83,196.79) -- cycle ; \draw   (369.85,90.82) -- (421.57,95.07) -- (516.64,207.13) ; \draw   (421.57,95.07) -- (433.76,84.73) ;

\end{tikzpicture}
    
    \caption{}
    \label{fig3}
\end{figure}

Note that each $\ga^*$ is supported in $B(0,R)$, so we rewrite 
\[\int(\sum_\ga \Id_{\ga^*})^{p/2}=\int_{|r|\le R}dr\int_{\{x_3=r\}}(\sum_\ga \Id_{\ga^*})^{p/2}dx_1dx_2.\]
We are going to calculate $\int_{\{x_3=r\}}(\sum_\ga \Id_{\ga^*})^{p/2}$. Here is the result:

\begin{proposition} For $p\ge 2$, we have
\begin{equation}\label{slice}
    \int_{\{x_3=r\}}(\sum_\ga \Id_{\ga^*})^{p/2}\approx\begin{cases}
        R^{2\beta}+R^{\frac{p\beta}{2}} & 0\le r\le 10,\\
        r^{1-\frac{p}{4}}R^{\beta\frac{p}{2}}+r^{2-\frac{p}{2}}R^{\beta\frac{p}{2}}+R^{2\beta}  & 10\le r\le R^\beta,\\
         r^{1-\frac{p}{4}}R^{\beta\frac{p}{2}}+R^{2\beta} & R^\beta\le r\le R.
    \end{cases}
\end{equation}
\end{proposition}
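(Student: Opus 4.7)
The plan is to reduce \eqref{slice} to an explicit computation of the multiplicity function $N(y_1,y_2) := \sum_\gamma \Id_{\gamma^*}(y_1, y_2, r)$ at the fixed height $r$, and then to integrate $N^{p/2}$ over the horizontal plane. I would begin by describing each slice $\gamma^*\cap\{x_3=r\}$. Parameterizing $\gamma^*$ using the orthonormal frame $(\omega,1)/\sqrt 2$, $(\omega^\perp,0)$, $(\omega,-1)/\sqrt 2$ of respective lengths $1,R^\beta,R$ (where $\omega\in \ZS^1$ is the midpoint of the arc defining $\gamma$), and solving $x_3=r$, one finds that the slice is, up to the rapidly decaying tail, a horizontal rectangle of dimensions $O(1)\times R^\beta$ centered at $-r\omega$, with short side radial and long side tangential to the circle of radius $r$. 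Since the arcs are $R^{-\beta}$-spaced on $\ZS^1$, there are $\sim R^\beta$ such rectangles distributed along this circle.

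Next I would compute $N$ as a function of $\rho=|x|$. For $x = \rho\omega_x$, the inclusion $x\in \gamma^*_{\mathrm{slice}}$ reduces to the two conditions $|\rho(\omega_x\cdot\omega)-r|\lesim 1$ and $|\rho(\omega_x\cdot\omega^\perp)|\lesim R^\beta$. Taking WLOG $\omega_x=(1,0)$ and writing $\omega=(\cos\theta,\sin\theta)$, these become $|\rho\cos\theta-r|\lesim 1$ and $|\rho\sin\theta|\lesim R^\beta$. Multiplying the angular measure of the valid set by the arc density $R^\beta$ yields: $N\sim R^\beta$ when $\rho,r=O(1)$; $N\sim R^\beta/\sqrt r$ on the annulus $|\rho-r|=O(1)$ with $r\gtrsim 1$; $N\sim R^\beta/\sqrt{\rho^2-r^2}$ on the ``thin outer annulus'' $1\le \rho-r\lesim\min(r,R^{2\beta}/r)$ (where $\theta$ lives in two short intervals around $\pm\arccos(r/\rho)$); and $N\sim R^\beta/\rho$ on the ``far region'' $\rho\gg r$ (which further requires $\rho\lesim R^\beta$). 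Outside $\sqrt{\rho^2-r^2}\lesim R^\beta$ the coverage is zero.

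Finally I would integrate $N^{p/2}\rho\,d\rho$ over each of these annular regions, sum the contributions, and case-split on $r$. The three candidate terms are $r^{1-p/4}R^{\beta p/2}$ from the on-circle annulus, $r^{2-p/2}R^{\beta p/2}$ from the far region (present only when $r\le R^\beta$), and the baseline $R^{2\beta}$, which equals the total mass $\int N=R^\beta\cdot R^\beta$ and is a lower bound for $\int N^{p/2}$ because $N\ge 1$ on its support. For $0\le r\le 10$ the origin region replaces the on-circle contribution by $R^{\beta p/2}$ and the outer region yields $R^{2\beta}$; for $10\le r\le R^\beta$ all three terms appear; for $R^\beta\le r\le R$ the far region is empty, leaving only the on-circle term and the baseline.

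The main obstacle is the angular bookkeeping: the width $\Delta\theta$ of the valid $\theta$-set interpolates between $1/\sqrt r$, $1/\sqrt{r(\rho-r)}$, and $1/\rho$ depending on the size of $\rho-r$, and one must verify that the tangential cutoff $\sqrt{\rho^2-r^2}\lesim R^\beta$ activates at exactly the right radius to suppress the far-region contribution once $r\ge R^\beta$. Care is also needed at the boundaries between regions, and in checking that the $R^{2\beta}$ baseline is retained in the cases where the power-of-$r$ terms happen to be smaller.
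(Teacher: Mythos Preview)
Your proposal is correct and follows essentially the same strategy as the paper: describe each slice $\gamma^*\cap\{x_3=r\}$ as a $1\times R^\beta$ rectangle tangent to the circle of radius $r$, compute the multiplicity function on annular regions, and integrate. The only cosmetic difference is that the paper parametrizes points by their distance $d=\dist(P,S_r)$ to the circle and computes the count via an explicit $\dist(\ell,P)$ calculation, whereas you parametrize by $\rho=|x|$ and read off the angular measure directly from the two inclusion conditions; the resulting regimes $d\lesim 1$, $1\lesim d\lesim r$, $r\lesim d\lesim R^\beta$ and their contributions match yours line by line.
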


\begin{figure}
    \centering
    \begin{tikzpicture}[x=0.75pt,y=0.75pt,yscale=-1,xscale=1]

\draw   (159,153) .. controls (159,106.61) and (196.61,69) .. (243,69) .. controls (289.39,69) and (327,106.61) .. (327,153) .. controls (327,199.39) and (289.39,237) .. (243,237) .. controls (196.61,237) and (159,199.39) .. (159,153) -- cycle ;
\draw   (153.5,111) -- (164.5,111) -- (164.5,195) -- (153.5,195) -- cycle ;
\draw   (167.35,94.12) -- (177.92,97.17) -- (154.65,177.88) -- (144.08,174.83) -- cycle ;
\draw   (180.99,76.02) -- (190.81,80.97) -- (153.01,155.98) -- (143.19,151.03) -- cycle ;
\end{tikzpicture}

    \caption{}
    \label{fig4}
\end{figure}

\begin{proof}
Fix the plane $\{x_3=r\}$.
For each $\ga^*$, we set 
\[ \ga^*_r:=\ga^*\cap \{x_3=r\}. \]
$\ga^*_r$ is a rectangle of size $1\times R^\beta$ in the plane $\{x_3=r\}$. Denote the center of $\ga^*_r$ by $C(\ga^*_r)$. We see that $C(\ga^*_r)$ lies on the circle 
\[S_r:=\{x_3=r, \sqrt{x_1^2+x_2^2}=r\},\]
and the long direction of $\ga^*_r$ is tangent to $S_r$ (see Figure \ref{fig4}).
We can rewrite the left hand side of \eqref{slice} as
\[ \int_{\R^2} (\sum_\ga \Id_{\ga^*_r})^{p/2}. \]
We also notice two useful facts: (1) $\#\{\ga^*_r\}\sim R^\beta$; (2) $\{C(\ga^*_r)\}$ are roughly $rR^{-\beta}$-separated on the circle $S_r$.

\medskip

\noindent\fbox{Case 1: $0\le r\le 10$}  

In this case, we see that $\{\ga^*_r\}$ essentially form a bush centered at the origin. Evaluating the concentrated part and spread-out part, we have
\[ \int_{\R^2} (\sum_\ga \Id_{\ga^*_r})^{p/2}\approx \int_{B(0,1)}(\sum_\ga \Id_{\ga^*_r})^{p/2} + \int_{B(0,R^\beta)\setminus B(0,\frac{1}{2}R^\beta)}(\sum_\ga \Id_{\ga^*_r})^{p/2}\sim R^{\frac{p\beta}{2}}+R^{2\beta}. \]

\medskip

\noindent\fbox{Case 2: $10\le r\le R^\beta$}  

For any point $P\in\bigcup \ga^*_r$, we are going to estimate $\sum_\ga \Id_{\ga^*_r}(P)$. Define
\[ d(P):=\dist(P,S_r). \]
We see that any $P\in\bigcup \ga^*_r$ satisfies $d(P)\lesssim R^\beta$, and if $P\in\bigcup \ga^*_r$ lies inside $S_r$ then $d(P)=0$. For simplicity, we write $d=d(P)$. We consider several cases:

\begin{figure}
    \centering
    \begin{tikzpicture}[x=0.75pt,y=0.75pt,yscale=-1,xscale=1]

\draw  (74,136.75) -- (293,136.75)(228,11.75) -- (228,255.75) (286,131.75) -- (293,136.75) -- (286,141.75) (223,18.75) -- (228,11.75) -- (233,18.75)  ;
\draw   (90,136.75) .. controls (90,98.64) and (120.89,67.75) .. (159,67.75) .. controls (197.11,67.75) and (228,98.64) .. (228,136.75) .. controls (228,174.86) and (197.11,205.75) .. (159,205.75) .. controls (120.89,205.75) and (90,174.86) .. (90,136.75) -- cycle ;
\draw   (223.44,88.13) -- (232.56,88.13) -- (232.56,185.38) -- (223.44,185.38) -- cycle ;
\draw   (212.67,85.09) -- (221.47,82.66) -- (247.33,176.41) -- (238.53,178.84) -- cycle ;

\draw (292,94) node   [align=left] {\begin{minipage}[lt]{68pt}\setlength\topsep{0pt}
P
\end{minipage}};

\end{tikzpicture}

    \caption{}
    \label{fig5}
\end{figure}

\begin{enumerate}
    \item\label{first} $d\le 10$. In this case, $P$ lies in the $10$-neighborhood of $S_r$. Therefore,
    \[ \sum_\ga \Id_{\ga^*_r}(P)=\sum_\ga \Id_{\ga^*_r\cap N_{10}(S_r)}(P) \]
    Noting that $\ga^*_r\cap N_{10}(S_r)$ is essentially a $1\times r^{1/2}$-rectangle centered at $C(\ga^*_r)(\in S_r)$ and noting that $\{C(\ga^*_r)\}$ are $rR^{-\beta}$ separated, we have
    \[ \sum_\ga\Id_{\ga^*_r\cap N_{10}(S_r)}(P)\sim \frac{r^{1/2}}{rR^{-\beta}}=r^{-1/2}R^\beta. \]

    \item\label{second} $10\le d\le r$. We claim in this case
    \[ \sum_\ga \Id_{\ga^*_r}(P)\sim R^\beta (rd)^{-1/2}. \]
    See Figure \ref{fig5}. By translation and rotation, we may assume $S_r$ is centered at $(-r,0)$ and $P$ lies on the $x_2$-axis. By Pythagorean theorem, the coordinate of $P$ is $(0, \sqrt{d(d+2r)})$. Since $d\le r$, we may ignore some constant factor and write the coordinate of $P$ as 
    \begin{equation}\label{P}
        P=(0,(dr)^{1/2}).
    \end{equation} 
    The next step is to find the number of $\ga^*_r$ that pass through $P$. Suppose $P\in\ga^*_r$. Since the center of $\ga^*_r$ lies in $ S_r$, we may denote its coordinate by
    $C(\ga^*_r)=(-r+r\cos\theta,r\sin\theta)$. Let $\ell$ be the line passing through $C(\ga^*_r)$ and tangent to $S_r$ (which is also the core line of $\ga^*_r$):
    \[ \ell: y-r\sin\theta=-\frac{\cos\theta}{\sin\theta}(x+r-r\cos\theta). \]
    Since $(dr)^{1/2}\le R^\beta$, we see that $P\in \ga^*_r$ is equivalent to $\dist(\ell,P)\le \frac{1}{2}$. By some calculation, 
    \begin{align*}
        \dist(\ell,P)&=\frac{|(dr)^{1/2}-r\sin\theta+\frac{\cos\theta}{\sin\theta}r(1-\cos\theta)|}{\sqrt{1+\frac{\cos^2\theta}{\sin^2\theta}}}\\
        &=|\sin\theta(dr)^{1/2}-r(1-\cos\theta)|\\
        &=2|(dr)^{1/2}\sin\frac{\theta}{2}\cos\frac{\theta}{2}-r\sin^2\frac{\theta}{2}|.
    \end{align*} 
We just need to find the number of $\theta$
 such that $\dist(\ell,P)\le 1/2$. By symmetry, we just compute the positive solutions $\theta$ that are close to $0$. In this case, the inequality becomes
 \[ (dr)^{1/2}\sin\frac{\theta}{2}\cos\frac{\theta}{2}-r\sin^2\frac{\theta}{2}\le 1/4. \]
 The meaningful solutions will be
 \begin{align*}
     \sin\frac{\theta}{2}&\le \frac{(dr)^{1/2}\cos\frac{\theta}{2}-\sqrt{dr\cos^2\frac{\theta}{2}-r}}{2r}\\
     &=\frac{1}{2}\frac{1}{(dr)^{1/2}\cos\frac{\theta}{2}+\sqrt{dr\cos^2\frac{\theta}{2}-r}}\\
     &\sim (dr)^{-1/2}. 
 \end{align*}
In the last step, we use $\cos\frac{\theta}{2}\sim1$.
Therefore, $0\le \theta\lesim (dr)^{-1/2}$. Since $\{C(\ga^*_r)\}$ have angle separation $\sim R^{-\beta}$, we see the number of $\ga^*_r$ that contains $P$ is $\sim R^\beta(dr)^{-1/2}$.
\medskip

\item\label{third} $r\le d\le R^{\beta}$. We claim in this case
\[ \sum_\ga\Id_{\ga^*_r}(P)\sim R^\beta d^{-1}. \]
The calculation is exactly the same as above, with the only modification that we replace \eqref{P} by $P=(0,d)$.

\end{enumerate}

\medskip

Combining the three scenarios \eqref{first}, \eqref{second}, \eqref{third}, we can estimate
\begin{align*}
    \int_{\R^2} (\sum_\ga \Id_{\ga^*_r})^{p/2}&=\bigg(\int_{d(P)\le 10}+ \int_{10\le d(P)\le r}+\int_{r\le d(P)\le R^\beta} \bigg)(\sum_\ga \Id_{\ga^*_r}(P))^{p/2}dP\\
    &\sim r (r^{-1/2}R^\beta)^{p/2}+ \sum_{d\in[10,r]\textup{~dyadic}}dr (R^\beta(rd)^{-1/2})^{p/2}+\sum_{d\in[r,R^\beta]\textup{~dyadic}} d^2 (R^\beta d^{-1})^{p/2}\\
    &\approx r^{1-\frac{p}{4}}R^{\beta\frac{p}{2}}+r^{2-\frac{p}{2}}R^{\beta\frac{p}{2}}+R^{2\beta} .
\end{align*}
In the last line, we use $\approx$ is because when $p=4$, the summation is over $\sim \log R$ same numbers instead of a geometric series.

\medskip

\noindent\fbox{Case 3: $R^\beta\le r\le R$}

This is almost the same as \fbox{Case 2}. Actually, it is even simpler, since we only have scenarios \eqref{first} and \eqref{second} (with the range in \eqref{second} replaced by $10\le d\le r^{-1}R^{2\beta}$ and noting $r^{-1}R^{2\beta}\le r$). The same argument will give
\begin{align*}
    \int_{\R^2} (\sum_\ga \Id_{\ga^*_r})^{p/2}&=\bigg(\int_{d(P)\le 10}+ \int_{10\le d(P)\le r^{-1}R^{2\beta}}\bigg)(\sum_\ga \Id_{\ga^*_r}(P))^{p/2}dP\\
    &\sim r (r^{-1/2}R^\beta)^{p/2}+ \sum_{d\in[10,r^{-1}R^{2\beta}]\textup{~dyadic}}dr (R^\beta(rd)^{-1/2})^{p/2}\\
    &\approx r^{1-\frac{p}{4}}R^{\beta\frac{p}{2}}+R^{2\beta}.
\end{align*}

\end{proof}

With \eqref{slice}, we can finally estimate
\begin{align*}
    \int(\sum_\ga \Id_{\ga^*})^{p/2}&=\int_{|r|\le R}dr\int_{\{x_3=r\}}(\sum_\ga \Id_{\ga^*})^{p/2}dx_1dx_2\\
    & \bigg(\int_{0\le |r|\le 10}+\int_{10\le |r|\le R^\beta}+\int_{R^{\beta}\le |r|\le R}\bigg)(\sum_\ga \Id_{\ga^*})^{p/2}dx_1dx_2dr\\
    &\lesim R^{2\beta}+R^{\frac{p\beta}{2}}+ \sum_{r\in[10,R^\beta] \textup{~dyadic}} r(r^{1-\frac{p}{4}}R^{\beta\frac{p}{2}}+r^{2-\frac{p}{2}}R^{\beta\frac{p}{2}}+R^{2\beta})\\
    &\ \ \ \ +\sum_{r\in[R^\beta,R] \textup{~dyadic}} r(r^{1-\frac{p}{4}}R^{\beta\frac{p}{2}}+R^{2\beta})\\
    &\lesim R^{\frac{p\beta}{2}}+ 
R^{\beta(2+\frac{p}{4})}+ R^{(2-\frac{p}{4})+\frac{p\beta}{2}}+R^{1+2\beta}\\
    &\sim R^{\frac{p\beta}{2}}+R^{(2-\frac{p}{4})+\frac{p\beta}{2}}+R^{1+2\beta}.
\end{align*}
The last step is because of $R^{\beta(2+\frac{p}{4})}\le R^{\frac{p\beta}{2}}+R^{(2-\frac{p}{4})+\frac{p\beta}{2}}$.

Combining \eqref{lowbound}, \eqref{upbound} and plugging into \eqref{ineqvcone}, we obtain
\[ \max\{R^{-1},R^{-\frac{3}{2}+\frac{2}{p}}\}\lessapprox C_{\beta,p}(R) R^{-1-\beta} (R^{\frac{\beta}{2}}+R^{\frac{2}{p}-\frac{1}{4}+\frac{\beta}{2}}+R^{\frac{1+2\beta}{p}}). \]
Considering of the three cases $2\le p\le 4$, $4\le p\le 8$ and $p\ge 8$ will give us that the right hand side of \eqref{cbeta} is actually the lower bound of $C_{\beta,p}(R)$ (up to $R^\e$ factor).

\subsection{Proof of Theorem \ref{thmvcone}}
The difficult part of the proof will be in the range $4\le p\le 8$. Recall from Remark \ref{rmk} that we need to prove for all $p$ but not only the endpoint $p$, since there is no interpolation argument.
The main tool we are going to use is called the \textit{amplitude dependent wave envelope estimate} by Guth-Maldague \cite{guth2022amplitude}. Before giving the proof, we introduce some notations from \cite{guth2020sharp}, \cite{guth2022amplitude}.

Recall $\cC$ is the truncated cone in $\R^3$:
$$\cC:=\{\xi\in\R^3:\xi_3=\sqrt{x_1^2+x_2^2},1/2\le \xi_3\le 1\}.$$
We have the canonical partition of $N_{R^{-1}}(\cC)$ into $1\times R^{-1/2}\times R^{-1}$-planks $\Theta=\{\theta\}$: 
$$N_{R^{-1}}(\cC)=\bigsqcup \theta.$$
More generally, for any dyadic $s\in [R^{-1/2},1]$, we can partition the $s^2$-neighborhood of $\cC$ into $1\times s\times s^2$-planks $\bS_s=\{\tau_s\}$: $$N_{s^2}(\cC)=\bigsqcup \tau_s.$$
Note in particular $\bS_{R^{-1/2}}=\Theta$.
For each $s$ and a frequency plank $\tau_s\in \bS_s$, we define the box $U_{\tau_s}$ in the physical space to be a rectangle centered at the origin of dimensions $Rs^2\times Rs\times R$ whose edge of length $Rs^2$ (respectively $Rs$, $R$) is parallel to the edge of $\tau_s$ with length $1$ (respectively $s$, $s^2$). Note that for any $\theta\in\Theta$, $U_\theta$ is just 
 $\theta^*$ (the dual rectangle of $\theta$). Also, $U_{\tau_s}$ is the convex hull of $\cup_{\theta\subset \tau_s}U_{\theta}$.

We make a useful observation, which will be used later. For any $\theta\subset \tau_s$, we see that $\theta^*$ is a $1\times R^{1/2}\times R$-plank. Define $U_{\theta,s}$ to be the $Rs^2\times Rs\times R$-plank which is made by dilating the corresponding edges of $\theta^*$. Our observation is that $U_{\tau_s}$ and $U_{\theta,s}$ are comparable:
\begin{equation}\label{compa}
    \frac{1}{C}  U_{\theta,s}\subset U_{\tau_s}\subset C  U_{\theta,s}.
\end{equation}
This is not hard to see by noting that the second longest edge of $\theta^*$ form an angle $\lesim s$ with the $Rs\times R$-face of $U_{\tau_s}$. We just omit the proof.

We cover $\R^3$ by translated copies of $U_{\tau_s}$. We will use $U\parallel U_{\tau_s}$ to indicate $U$ is one of the translated copies.
If $U\parallel U_{\tau_s}$, then we define $S_U f$ by
\begin{equation}\label{defSf}
    S_U f=\big(\sum_{\theta\subset \tau_s}|f_\theta|^2\big)^{1/2}\Id_U.
\end{equation}
We can think of $S_U f$ as the wave envelope of $f$ localized in $U$ in the physical space and localized in $\tau_s$ in the frequency space.
We have the following inequality of Guth, Wang and Zhang (see \cite[Theorem 1.5]{guth2020sharp} ):
\begin{theorem}[Wave envelope estimate]\label{GWZstandard}
Suppose $\supp\wh f\subset N_{R^{-1}}(\cC)$. Then
\begin{equation}\label{standardineq}
    \|f\|_{4}^4\le C_\e R^{\e} \sum_{R^{-1/2}\le s\le 1}\sum_{\tau_s\in\bS_s}\sum_{U\parallel U_{\tau_s}} |U|^{-1}\|S_Uf\|_2^4,
\end{equation}
for any $\e>0$.
\end{theorem}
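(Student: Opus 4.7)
The plan is to follow the high--low frequency method of Guth--Wang--Zhang. We begin from Plancherel, $\|f\|_4^4 = \||f|^2\|_2^2$, so it suffices to decompose $|f|^2$ through a multi-scale analysis adapted to the cone. For each dyadic scale $s \in [R^{-1/2}, 1]$, set $g_s := \sum_{\tau_s \in \bS_s} |f_{\tau_s}|^2$, noting $g_{R^{-1/2}} = \sum_{\theta \in \Theta} |f_\theta|^2$ and $g_1 \approx |f|^2$. The geometric input is that $\widehat{|f_{\tau_s}|^2}$ is supported in $\tau_s - \tau_s$, which is essentially the plank dual to $U_{\tau_s}$; after summing over $\tau_s \in \bS_s$, these planks through the origin exhibit a natural annular structure indexed by the scale $s$.

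The first main step is the high--low telescope. Choose a Schwartz bump $\eta_s$ adapted to a ball whose radius is comparable to the reciprocal of the second-longest dimension of $U_{\tau_s}$, and split $g_s = g_s^{\mathrm{low}} + g_s^{\mathrm{high}}$ with $g_s^{\mathrm{low}} := g_s * \check\eta_s$. A Fourier support calculation shows that the telescoping differences $g_s^{\mathrm{low}} - g_{s/2}^{\mathrm{low}}$ across different dyadic $s$ live in essentially disjoint annular regions of frequency space. Plancherel applied to this telescope, together with pigeonholing a dominant scale (losing only $R^\e$ for the logarithmic number of scales), reduces matters to bounding $\|g_s^{\mathrm{high}}\|_2^2$ at a single $s$.

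The second main step converts this $L^2$ bound into the wave envelope form. For $\theta \subset \tau_s$, the function $|f_\theta|^2$ is essentially constant on translates of $U_\theta$, and by the comparability \eqref{compa} also essentially constant on translates of $U_{\tau_s}$. Partition $\R^3$ into $U \parallel U_{\tau_s}$ and apply local $L^2$-orthogonality of $\{f_\theta\}_{\theta \subset \tau_s}$ on each $U$ to identify $\int_U g_s \sim \|S_U f\|_2^2$. Combining with the locally constant property yields
\begin{equation*}
   \int_U g_s^2 \;\lesssim\; |U|^{-1}\,\|S_U f\|_2^4,
\end{equation*}
and summing over $U \parallel U_{\tau_s}$ and over $\tau_s \in \bS_s$ produces the right-hand side of \eqref{standardineq} at scale $s$.

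The main obstacle is controlling the interaction between the high-frequency pieces $g_s^{\mathrm{high}}$ at different scales. The cutoff radii must be chosen precisely so that the telescoping differences $g_s^{\mathrm{low}} - g_{s/2}^{\mathrm{low}}$ carry essentially disjoint Fourier supports; otherwise one loses orthogonality when invoking Plancherel. A secondary difficulty is the residual low-frequency piece $g_1^{\mathrm{low}}$, which concentrates near the origin in frequency and must be absorbed into the $s = 1$ contribution on the right-hand side via a direct Cauchy--Schwarz estimate. The careful bookkeeping required to keep the total loss at $R^\e$ throughout all scales is exactly what necessitates the $C_\e R^\e$ factor in \eqref{standardineq}.
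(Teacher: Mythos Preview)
The paper does not prove this theorem; it is quoted verbatim as Theorem~1.5 of Guth--Wang--Zhang \cite{guth2020sharp} and used as a black box. So there is no ``paper's own proof'' to compare against---you are sketching a proof the paper deliberately outsources.

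That said, your sketch of the GWZ argument has a genuine gap. The telescope you write down involves the differences $g_s^{\mathrm{low}} - g_{s/2}^{\mathrm{low}}$, but $g_s$ and $g_{s/2}$ are \emph{different} functions (built from different partitions of the cone), so the identity $g_1^{\mathrm{low}} - g_{R^{-1/2}}^{\mathrm{low}} = \sum_s (g_{2s}^{\mathrm{low}} - g_s^{\mathrm{low}})$ is false as stated, and even if one had Fourier-support disjointness of the summands, this alone does not bound $\||f|^2\|_2^2$ by wave envelopes. What actually drives the GWZ high--low method is a pointwise approximation lemma: the low-frequency part of $\sum_{\theta\subset\tau_s}|f_\theta|^2$ is essentially $|f_{\tau_s}|^2$ itself, i.e.\ $g_s^{\mathrm{low}} \approx g_{2s}$. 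This is the step that requires the specific geometry of the cone (Lorentz rescaling) and is the real content of the proof; it is what allows the iteration $g_{R^{-1/2}} \approx g_{R^{-1/2}}^{\mathrm{high}} + g_{2R^{-1/2}} \approx \cdots$ to close. Your sketch replaces this with a Fourier-support observation that is neither sufficient nor quite correct.

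A secondary issue: the second step, converting $\|g_s\|_2^2$ to wave envelopes via local constancy and local orthogonality on translates of $U_{\tau_s}$, is fine in spirit, but note that $|f_\theta|^2$ is locally constant on $\theta^*$, not on $U_{\tau_s}$ (which is much larger when $s\gg R^{-1/2}$); the comparability \eqref{compa} does not say otherwise. One needs an averaged version of local constancy here, which GWZ handle carefully.
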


There is a refined version of the wave envelope estimate proved by Guth and Maldague (See \cite[Theorem 2]{guth2022amplitude} ):
\begin{theorem}[Amplitude dependent wave envelope estimate]\label{amplitudethm}
    Suppose $\supp\wh f\subset N_{R^{-1}}(\cC)$. Then for any $\alpha>0$,
\begin{equation}\label{amplitudeineq}
    \alpha^4 |\{ x\in\R^3:|f(x)|>\alpha \}|\le C_\e R^{\e} \sum_{R^{-1/2}\le s\le 1}\sum_{\tau_s\in\bS_s}\sum_{U\in \cG_{\tau_s}(\alpha)} |U|^{-1}\|S_Uf\|_2^4, 
\end{equation}
for any $\e>0$.
Here, $\cG_{\tau_s}(\alpha)=\{ U\parallel U_{\tau_s}:|U|^{-1}\|S_U f\|_2^2\gtrsim |\log R|^{-1} \frac{\alpha^2}{(\#\bS_s)^2} \}$.
\end{theorem}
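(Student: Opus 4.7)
The plan is to derive Theorem \ref{amplitudethm} from Theorem \ref{GWZstandard} by truncating $f$ to remove wave envelopes whose amplitudes lie below the threshold defining $\cG_{\tau_s}(\alpha)$. Let $E_\alpha = \{x \in \R^3 : |f(x)| > \alpha\}$. By Chebyshev it suffices to bound $\int_{E_\alpha}|f|^4$, so my goal is to construct a function $\tilde f$, obtained by deleting small-amplitude contributions at every scale, such that $|f - \tilde f|(x) \le \alpha/2$ pointwise. Then $E_\alpha \subseteq \{|\tilde f| > \alpha/2\}$, and applying Theorem \ref{GWZstandard} to $\tilde f$ yields exactly the right-hand side of \eqref{amplitudeineq}, since all summands over $U \notin \cG_{\tau_s}(\alpha)$ vanish by construction of $\tilde f$ (and the ``good'' summands only decrease under the truncation).

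The key pointwise control driving the truncation is the locally constant heuristic: for each $U \parallel U_{\tau_s}$ and $x \in U$, one has $|f_{\tau_s}(x)| \lesim |U|^{-1/2}\|f_{\tau_s}\|_{L^2(\omega_U)} \lesim |U|^{-1/2}\|S_Uf\|_2$, where the last inequality uses local $L^2$ orthogonality of $\{f_\theta\}_{\theta \subset \tau_s}$ on the physical-space rectangle $U$. This orthogonality follows from the comparability \eqref{compa} together with the fact that the translates of $\theta^*$ for $\theta \subset \tau_s$ are finitely overlapping inside $U^*$. Hence whenever $U \notin \cG_{\tau_s}(\alpha)$, the pointwise bound $|f_{\tau_s}(x)| \lesim |\log R|^{-1/2}\alpha/\#\bS_s$ holds for $x \in U$; summing over $\tau_s \in \bS_s$ gives total bad contribution at a single scale bounded by $|\log R|^{-1/2}\alpha$.

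The main obstacle is handling the $O(\log R)$ dyadic scales $s \in [R^{-1/2},1]$ simultaneously: naively summing the per-scale bad contributions produces $(\log R)^{1/2}\alpha$, which is too large to absorb into $\alpha/2$. I expect to close this loss by performing the truncation in the order dictated by the Guth--Wang--Zhang high-low frequency decomposition that underlies the proof of Theorem \ref{GWZstandard}, so that the $|\log R|^{-1}$ in the amplitude threshold (appearing squared in the definition of $\cG_{\tau_s}(\alpha)$) is exactly what is needed to compensate for the logarithmic loss across scales. Equivalently, one can perform a dyadic pigeonhole in amplitude and iterate the wave envelope estimate scale-by-scale rather than all-at-once, so that each finer scale is only required to resolve the portion of the level set not already captured by coarser scales; the $|\log R|$ factor then appears only once. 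Once the pointwise bound $|f - \tilde f| \le \alpha/2$ is secured, the remainder of the argument is Theorem \ref{GWZstandard} applied as a black box to $\tilde f$.
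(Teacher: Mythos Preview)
First, note that the paper does not prove this theorem at all: it is quoted verbatim as Theorem~2 of Guth--Maldague \cite{guth2022amplitude} and used as a black box. So there is no in-paper proof to compare against, and the relevant question is whether your sketch actually works.

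It does not, and the gap is in the very first substantive step. You assert that for $x\in U\parallel U_{\tau_s}$ one has
\[
|f_{\tau_s}(x)|\ \lesim\ |U|^{-1/2}\,\|f_{\tau_s}\|_{L^2(\omega_U)}.
\]
This would require $|f_{\tau_s}|$ to be essentially constant on $U$. But $\widehat{f_{\tau_s}}$ is supported in the $1\times s\times s^2$ plank $\tau_s$, so $|f_{\tau_s}|$ is locally constant only at the dual scale $\tau_s^*$, which has dimensions $1\times s^{-1}\times s^{-2}$. The box $U$ has dimensions $Rs^2\times Rs\times R$, which for $s>R^{-1/2}$ is strictly larger than $\tau_s^*$ in every direction (by factors of $Rs^2\ge 1$). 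Hence $|f_{\tau_s}|$ can oscillate wildly across $U$, and the sup-by-average bound you wrote is simply false in general. A single wave packet of $f_{\tau_s}$ concentrated on one $\tau_s^*$-tile inside $U$ already violates it by a factor of $(|U|/|\tau_s^*|)^{1/2}=(Rs^2)^{3/2}$.

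Because this pointwise control fails, the whole truncation scheme collapses: you cannot conclude that removing the ``bad'' $U$'s changes $f$ by at most $\alpha/2$ pointwise, and so you cannot reduce to Theorem~\ref{GWZstandard} applied to $\tilde f$. The actual Guth--Maldague argument does not treat Theorem~\ref{GWZstandard} as a black box; it reopens the high/low frequency machinery from \cite{guth2020sharp} and performs an iterative pruning of the wave packets $f_\theta$ (not of $f_{\tau_s}$) across scales, tracking both high and low parts separately so that the amplitude information survives. Your final paragraph gestures at this, but the pruning has to be done at the level of the $\theta$-pieces with the high/low splitting built in from the start---it cannot be recovered \emph{a posteriori} from the statement of Theorem~\ref{GWZstandard}.
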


\begin{remark}
{\rm
    In the original paper \cite{guth2022amplitude}, their definition for $\cG_{\tau_s}(\alpha)$ is 
    \[\cG_{\tau_s}(\alpha)=\{ U\parallel U_{\tau_s}:|U|^{-1}\|S_U f\|_2^2\gtrapprox \frac{\alpha^2}{(\#\tau_s)^2} \},\] 
    where $\#\tau_s=\#\{\tau_s\in\bS_s:f_{\tau_s}\not\equiv 0\}$. Noting that $\#\tau_s\le \#\bS_s$, we see our $\cG_{\tau_s}(\alpha)$ is a bigger set, and hence our \eqref{amplitudeineq} is weaker than the original version (\cite{guth2022amplitude} Theorem 2). 
}
\end{remark}

\begin{proof}[Proof of Theorem \ref{thmvcone}]\hfill

\medskip
\noindent\fbox{Case 1: $p\ge 8$}

This is just by Cauchy-Schwarz inequality, since $\#\Ga_\beta(R^{-1})\sim R^{\beta}.$

\medskip

\noindent\fbox{Case 2: $2\le p\le 4$}

We have \eqref{standardineq}. By dyadic pigeonholing on $s$, we can find $s$ such that
\begin{equation}
    \|f\|_4^4\lessapprox \sum_{\tau\in\bS_s}\sum_{U\parallel U_\tau} |U|^{-1}\|S_U f\|_2^4.
\end{equation}
We fix this $s$.
Denote $\bU:=\{U: U\parallel U_\tau \textup{~for~some~}\tau\in\bS_s\}$. Then the inequality above can be written as

\begin{equation}\label{L4}
    \|f\|_4^4\lessapprox \sum_{U\in\bU} |U|^{-1}\|S_U f\|_2^4.
\end{equation}
We remind readers that each $U\in\bU$ has size $Rs^2\times Rs\times R$. We also have the following $L^2$ estimate:
\begin{equation}\label{L2}
    \|f\|_2^2\sim \sum_{U\in \bU} \|S_U f\|^2_2.
\end{equation}
We provide a quick proof for \eqref{L2}. We have
\[ \|f\|_2^2=\sum_{\tau\in \bS_s}\|f_\tau\|_2^2\sim\sum_{\tau\in \bS_s}\sum_{U\parallel U_\tau}\|f_\tau\|_{L^2(U)}^2. \]
Noting that $\{f_\theta: \theta\subset \tau\}$ are locally orthogonal on any translation of $U_\tau$ and recalling \eqref{defSf}, we have
\[ \|f\|_2^2\sim \sum_{\tau\in\bS_s}\sum_{U\parallel U_\tau}\int_U \sum_{\theta\subset\tau}|f_\theta|^2=\sum_{U\in\bU}\|S_U f\|_2^2. \]

Next, we will do dyadic pigeonholing on $\|S_U f\|_2^2$. (Actually, we only need to prove a local version of the inequality, so we just care about those $U$ that intersect $B_R$. There are in total $R^{O(1)}$ of them.) We can find a number $W>0$ and set $\bU'=\{U\in\bU: \|S_Uf\|_2^2\sim W\}$, so that
\begin{align}
    \label{LL4}\|f\|_4^4&\lessapprox |U|^{-1}\#\bU' W^2,\\
    \label{LL2}\|f\|_2^2&\approx \#\bU' W.
\end{align}
Since every $U\in\bU$ has the same measure $R^3s^2$, there is no ambiguity to write $|U|^{-1}$ in \eqref{LL4}.

Let $\alpha$ be such that $\frac{1}{p}=\frac{\alpha}{4}+\frac{1-\alpha}{2}$. Then $\alpha=4(\frac{1}{2}-\frac{1}{p})$. Applying H\"older's inequality gives
\[ \|f\|_p^p\le \|f\|_4^{\alpha p} \|f\|_2^{(1-\alpha)p}\lessapprox |U|^{-p(\frac{1}{2}-\frac{1}{p})}\#\bU' W^{\frac{p}{2}}\le |U|^{-p(\frac{1}{2}-\frac{1}{p})}\sum_{U\in\bU} \|S_U f\|_2^p. \]

Next we are going to exploit more orthogonality for $S_U f$. Suppose $U\parallel U_\tau$. By definition
\[ \|S_U f\|_2^2=\int_U \sum_{\theta\subset \tau}|f_\theta|^2=\int_U \sum_{\theta\subset \tau}|\sum_{\ga\subset \theta}f_\ga|^2. \]
We remind readers that $\{\tau\}$ are $1\times s\times s^2$-caps; $\{\theta\}$ are $1\times R^{-1/2}\times R^{-1}$-caps; $\{\ga\}$ are $1\times R^{-\beta}\times R^{-1}$-caps.
Since $U$ is too small for $\{f_\ga:\ga\subset \theta\}$ to be orthogonal on $U$, we need to find a larger rectangle. First, let us look at the rectangles $\{\ga: \ga\subset \theta\}$. We want to find a rectangle $\nu_\theta$ as big as possible, such that $\{\ga+\nu: \ga\subset \theta\}$ are finitely overlapping. Actually, we can choose $\nu_\theta$ to be of size $R^{1/2-\beta}\times R^{-\beta}\times R^{-1}$ (here the edge of $\nu_\theta$ with length $R^{1/2-\beta}$ (respectively $R^{-\beta}$, $R^{-1}$) are parallel to the edge of $\theta$ with length $1$ (respectively $R^{-1/2}$, $R^{-1}$). See Figure \ref{fig5.5}: the left hand side is $\theta$ and $\{\ga:\ga\subset\theta\}$; the right hand side is our $\nu_\theta$. It is not hard to see $\{\ga+\nu_\theta: \ga\subset \theta\}$ are finitely overlapping. Let $\nu_\theta^*$ be the dual of $\nu_\theta$ in the physical space, then $\nu_\theta^*$ has size $R^{\beta-\frac{1}{2}}\times R^\be\times R$ and we have the local orthogonality (we just ignore the rapidly decaying tail for simplicity):
\[ \int_{\nu_\theta^*} |\sum_{\ga\subset\theta} f_\ga|^2 \sim \int_{\nu_\theta^*} \sum_{\ga\subset\theta} |f_\ga|^2 \]

\begin{figure}
    \centering
    \begin{tikzpicture}[x=0.75pt,y=0.75pt,yscale=-1,xscale=1]

\draw   (240.12,50.8) -- (221.82,253.7) -- (134.63,253.61) -- (116.75,50.67) -- cycle ;
\draw    (141,51) -- (155,253.5) ;
\draw    (164,51.25) -- (171,253.75) ;
\draw    (214.5,51.25) -- (205,254.5) ;
\draw    (188,50.75) -- (188,253.5) ;
\draw   (356.5,179.25) -- (376.5,179.25) -- (376.5,252) -- (356.5,252) -- cycle ;

\draw (173,11.4) node [anchor=north west][inner sep=0.75pt]    {$\theta $};
\draw (131,137.4) node [anchor=north west][inner sep=0.75pt]    {$\gamma $};
\draw (162,263.4) node [anchor=north west][inner sep=0.75pt]    {$R^{-1/2}$};
\draw (112,23.4) node [anchor=north west][inner sep=0.75pt]    {$R^{-\beta }$};
\draw (356.5,198.4) node [anchor=north west][inner sep=0.75pt]    {$\nu _{\theta }$};
\draw (385,209.9) node [anchor=north west][inner sep=0.75pt]    {$R^{1/2-\beta }$};
\draw (351.5,153.4) node [anchor=north west][inner sep=0.75pt]    {$R^{-\beta }$};

\end{tikzpicture}

    \caption{}
    \label{fig5.5}
\end{figure}

Define 
\begin{equation}\label{defV}
    V_\theta=U_\tau+\nu_\theta^*,
\end{equation}
which is a rectangle of size 
\[\max\{Rs^2,R^{\beta-\frac{1}{2}}\}\times \max\{Rs,R^\beta\}\times R.\]
We tile $\R^3$ with translated copies of $V_\theta$, and we write $V\parallel V_\theta$ if $V$ is one of the tiles. 
Noting that $\frac{R^{\beta-\frac{1}{2}}}{Rs^2}\le \frac{R^\beta}{Rs}$, we will discuss three scenarios: 1. $\frac{R^{\beta-\frac{1}{2}}}{Rs^2}\le \frac{R^\beta}{Rs}\le 1$; 2. $\frac{R^{\beta-\frac{1}{2}}}{Rs^2}\le 1\le \frac{R^\beta}{Rs}$; 3. $1\le\frac{R^{\beta-\frac{1}{2}}}{Rs^2}\le \frac{R^\beta}{Rs}$. 

$\bullet$ If $\frac{R^{\beta-\frac{1}{2}}}{Rs^2}\le \frac{R^\beta}{Rs}\le 1$, then $V_\theta$ is essentially $U_\tau$. In this case, we already have the orthogonality of $\{f_\ga: \ga\subset\theta\}$ on $U(\parallel U_\tau)$. Therefore,
\begin{align*}
    \|f\|_p^p&\lessapprox |U|^{-p(\frac{1}{2}-\frac{1}{p})}\sum_{\tau\in\bS_s}\sum_{U\parallel U_\tau}\bigg(\int_U \sum_{\theta\subset \tau}|\sum_{\ga\subset \theta}f_\ga|^2\bigg)^{\frac{p}{2}}\\
    &\sim|U|^{-p(\frac{1}{2}-\frac{1}{p})}\sum_{\tau\in\bS_s}\sum_{U\parallel U_\tau}\bigg(\int_U \sum_{\ga\subset\tau}|f_\ga|^2\bigg)^{\frac{p}{2}}\\
    &\le \sum_{\tau\in\bS_s}\sum_{U\parallel U_\tau}\int_U(\sum_{\ga\subset\tau}|f_\ga|^2)^{\frac{p}{2}}\\
    &=\sum_{\tau\in\bS_s}\int_{\R^3}(\sum_{\ga\subset\tau}|f_\ga|^2)^{\frac{p}{2}}\\
    &\le \int_{\R^3}(\sum_{\ga\in\Ga_{\beta}(R^{-1})}|f_\ga|^2)^{p/2}.
\end{align*} 

$\bullet$ In the other two scenarios, we proceed as follows.
\begin{align*}
    \|f\|_p^p&\lessapprox|U|^{-p(\frac{1}{2}-\frac{1}{p})}\sum_{\tau\in\bS_s} \sum_{U\parallel U_\tau} \bigg(\int_U \sum_{\theta\subset\tau}|f_\theta|^2\bigg)^{p/2}\\
    &\le |U|^{-p(\frac{1}{2}-\frac{1}{p})}\sum_{\tau\in\bS_s} \sum_{U\parallel U_\tau} \#\{\theta\subset\tau\}^{\frac{p}{2}-1}\sum_{\theta\subset\tau}\bigg(\int_U |f_\theta|^2\bigg)^{p/2}\\
    &\le |U|^{-p(\frac{1}{2}-\frac{1}{p})}\#\{\theta\subset\tau\}^{\frac{p}{2}-1}\sum_{\tau\in\bS_s}  \sum_{\theta\subset\tau} 
\sum_{V\parallel V_\theta} \bigg(\int_V |f_\theta|^2\bigg)^{p/2}\\
(\textup{By~orthogonality})&\sim |U|^{-p(\frac{1}{2}-\frac{1}{p})}\#\{\theta\subset\tau\}^{\frac{p}{2}-1}\sum_{\tau\in\bS_s}  \sum_{\theta\subset\tau} 
\sum_{V\parallel V_\theta} \bigg(\int_V \sum_{\ga\subset\theta}|f_\ga|^2\bigg)^{p/2}\\
(\textup{H\"older})&\le |U|^{-p(\frac{1}{2}-\frac{1}{p})}\#\{\theta\subset\tau\}^{\frac{p}{2}-1}\sum_{\tau\in\bS_s}  \sum_{\theta\subset\tau} 
\sum_{V\parallel V_\theta} |V|^{p(\frac{1}{2}-\frac{1}{p})}\int_V (\sum_{\ga\subset\theta}|f_\ga|^2)^{p/2}\\
&\le \bigg(\frac{|V|}{|U|}\bigg)^{p(\frac{1}{2}-\frac{1}{p})}\#\{\theta\subset\tau\}^{\frac{p}{2}-1} \|(\sum_{\ga\in\Ga_\beta(R^{-1})}|f_\ga|^2)^{\frac{1}{2}}\|_p^p\\
&= \bigg(\max\{\frac{R^\beta}{Rs},1\}\max\{\frac{R^{\beta-\frac{1}{2}}}{Rs^2},1\}\bigg)^{p(\frac{1}{2}-\frac{1}{p})} (s R^{\frac{1}{2}})^{\frac{p}{2}-1}\|(\sum_{\ga\in\Ga_\beta(R^{-1})}|f_\ga|^2)^{\frac{1}{2}}\|_p^p
\end{align*} 

We just need to check 
\begin{equation}\label{tocheck}
    \bigg(\max\{\frac{R^\beta}{Rs},1\}\max\{\frac{R^{\beta-\frac{1}{2}}}{Rs^2},1\}\bigg)^{p(\frac{1}{2}-\frac{1}{p})} (s R^{\frac{1}{2}})^{\frac{p}{2}-1}\lesim R^{(\beta-\frac{1}{2})(p-2)}. 
\end{equation} 
$*$ If $\frac{R^{\beta-\frac{1}{2}}}{Rs^2}\le 1\le \frac{R^\beta}{Rs}$, then the left hand side of \eqref{tocheck} equals $R^{(\beta-\frac{1}{2})(\frac{p}{2}-1)}$, which is $\le$ the right hand side of \eqref{tocheck}.

\noindent
$*$ If $1\le\frac{R^{\beta-\frac{1}{2}}}{Rs^2}\le \frac{R^\beta}{Rs}$, then the left hand side of \eqref{tocheck} equals
\[ (R^{2\beta-2}s^{-2})^{\frac{p}{2}-1}, \]
which is less than the right hand side of \eqref{tocheck} since $s^{-1}\le R^{1/2}$.
\medskip

\noindent\fbox{Case 3: $4\le p\le 8$}

Note that 
\[ \|f\|_p^p\sim \sum_{\alpha \text{~dyadic}}\alpha^p |\{x\in\R^3:|f(x)|\sim \alpha\}|. \]
We can assume the range of $\alpha$ is $R^{-100}\|f\|_\infty\le \alpha \le \|f\|_\infty$. Other $\alpha$ are considered as negligible.

By dyadic pigeonholing, we can find $\alpha>0$ such that
\[ \|f\|_p^p\lesim (\log R)\cdot\alpha^p |\{x\in\R^3:|f(x)|\sim \alpha\}|+\textup{negligible~term}. \]
We just need to fix this $\alpha$, and prove an upper bound for $\alpha^p |\{x\in\R^3:|f(x)|> \alpha\}|$.
By \eqref{amplitudeineq}, we have
\[ \alpha^4 |\{ x\in\R^3:|f(x)|>\alpha \}|\le C_\e R^{\e} \sum_{R^{-1/2}\le s\le 1}\sum_{\tau_s\in\bS_s}\sum_{U\in \cG_{\tau_s}(\alpha)} |U|^{-1}\|S_Uf\|_2^4.\]
By pigeonholing again, we can find $s$ such that
\begin{equation}\label{amp}
    \alpha^4 |\{ x\in\R^3:|f(x)|>\alpha \}|\lessapprox\sum_{\tau\in\bS_s}\sum_{U\in \cG_{\tau}(\alpha)} |U|^{-1}\|S_Uf\|_2^4.
\end{equation}
We fix this $s$. We also remind readers the definition of $\cG_\tau(\alpha)$:
\[  \cG_\tau(\alpha):=\{ U\parallel U_\tau: |U|^{-1}\int_U\sum_{\theta\subset \tau}|f_\theta|^2\gtrapprox (\alpha s)^2 \}, \]
since $\#\bS_s\sim s^{-1}$.
Continuing the estimate in \eqref{amp}, we have
\begin{align*}
    \alpha^4 |\{ x\in\R^3:|f(x)|>\alpha \}|&\lessapprox\sum_{\tau\in\bS_s}\sum_{U\in \cG_{\tau}(\alpha)} |U|^{-1}\bigg(\int_{U} \sum_{\theta\subset\tau}|f_\theta|^2\bigg)^2\\
    &\lessapprox \sum_{\tau\in\bS_s}\sum_{U\in \cG_{\tau}(\alpha)} |U|^{-1}\bigg(\int_{U} \sum_{\theta\subset\tau}|f_\theta|^2\bigg)^{\frac{p}{2}}\bigg( |U|(\alpha s)^2 \bigg)^{2-\frac{p}{2}}.
\end{align*}
Moving the power of $\alpha$ to the left hand side, we obtain
\begin{equation}
    \alpha^p |\{ x\in\R^3:|f(x)|>\alpha \}|\lessapprox\sum_{\tau\in\bS_s}\sum_{U\in \cG_{\tau}(\alpha)} |U|^{1-\frac{p}{2}}\bigg(\int_{U} \sum_{\theta\subset\tau}|f_\theta|^2\bigg)^{\frac{p}{2}}s^{4-p}.
\end{equation}
Our final goal is to prove that the right hand side above is 
\begin{equation}\label{finalgoal}
    \lessapprox R^{\frac{\beta p}{2}+\frac{p}{4}-2}\|(\sum_\ga |f_\ga|^2)^{1/2}\|_p^p.
\end{equation}

To do that, we again need to exploit the orthogonality of $\{f_\ga:\ga\subset\theta\}$. The argument is different from that in \fbox{Case 2: $2\le p\le 4$}. In \fbox{Case 2: $2\le p\le 4$}, we expand the integration domain $U$ to a bigger rectangle $V$ to get orthogonality, whereas here we are going to use Cauchy-Schwarz inequality.

We discuss the geometry of these caps. Fix a $\tau\in\bS_s$. By definition, $U_\tau$ is a $Rs^2\times Rs\times R$-rectangle in the physical space. Then $U_\tau^*$ is a $R^{-1}s^{-2}\times R^{-1}s^{-1}\times R^{-1}$-rectangle. We make the following observation: for each $\theta\subset \tau$, we can show that $U^*_\tau$ is comparable to another rectangle, which has the same size but with the edges parallel to the corresponding edges of $\theta$. We explain it with more details. Let $U_{\theta,s}$ be the $Rs^2\times Rs\times R$-rectangle which is made from the $1\times R^{1/2}\times R$-rectangle $\theta^*$ by dilating the corresponding edges. Then $U_{\theta,s}^*$ is a $R^{-1}s^{-2}\times R^{-1}s^{-1}\times R^{-1}$-rectangle whose edges are parallel to the corresponding edges of the $1\times R^{-1/2}\times R^{-1}$-rectangle $\theta$. We want to show $U_\tau^*$ and $U_{\theta,s}^*$ are comparable. This is equivalent to show $U_\tau$ and $U_{\theta,s}$ are comparable, which is an observation we made at \eqref{compa}. Therefore, for any $\theta\subset\tau$, we can assume the edges of $U_\tau^*$ are parallel to the corresponding edges of $\theta$.

Fix a $U\parallel U_\tau$,  then $U^*=U_\tau^*$.
See Figure \ref{fig6}: on the left is $\theta$ and $\{\ga: \ga\subset \theta\}$; on the middle is our $U^*$. We will discuss two scenarios depending on whether $R^{-\beta}$ (the width of $\ga$) is bigger than $R^{-1}s^{-1}$ (the width of $U^*$).

\begin{figure}
    \centering
    \begin{tikzpicture}[x=0.75pt,y=0.75pt,yscale=-1,xscale=1]

\draw   (220.12,60.8) -- (201.82,263.7) -- (114.63,263.61) -- (96.75,60.67) -- cycle ;
\draw    (121,61) -- (135,263.5) ;
\draw    (144,61.25) -- (151,263.75) ;
\draw    (194.5,61.25) -- (185,264.5) ;
\draw    (168,60.75) -- (168,263.5) ;
\draw   (289,194.5) -- (326,194.5) -- (326,260.5) -- (289,260.5) -- cycle ;
\draw   (582.12,64.8) -- (563.82,267.7) -- (476.63,267.61) -- (458.75,64.67) -- cycle ;
\draw    (496,65.5) -- (509,267.5) ;
\draw    (541,65.5) -- (535,268.5) ;

\draw (153,21.4) node [anchor=north west][inner sep=0.75pt]    {$\theta $};
\draw (111,147.4) node [anchor=north west][inner sep=0.75pt]    {$\gamma $};
\draw (142,273.4) node [anchor=north west][inner sep=0.75pt]    {$R^{-1/2}$};
\draw (92,33.4) node [anchor=north west][inner sep=0.75pt]    {$R^{-\beta }$};
\draw (335,217.4) node [anchor=north west][inner sep=0.75pt]    {$R^{-1} s^{-2}$};
\draw (284,275.4) node [anchor=north west][inner sep=0.75pt]    {$R^{-1} s^{-1}$};
\draw (297,219.4) node [anchor=north west][inner sep=0.75pt]    {$U^{\ast }$};
\draw (472,142.4) node [anchor=north west][inner sep=0.75pt]    {$\pi $};

\end{tikzpicture}

    \caption{}
    \label{fig6}
\end{figure}

$\bullet$ If $R^{-\beta}\ge R^{-1}s^{-1}$, then we see that $\{ \ga+U^*: \ga\subset \theta \}$ are finitely overlapping. This means that $\{f_\ga: \ga\subset\theta\}$ are locally orthogonal on $U$: 
\[ \int_U |\sum_{\ga\subset\theta}f_\ga|^2\lesim \int_U \sum_{\ga\subset\theta}|f_\ga|^2. \]
Therefore,
\begin{align*}
    \|f\|_p^p&\lessapprox |U|^{1-\frac{p}{2}}\sum_{\tau\in\bS_s}\sum_{U\parallel U_s}\bigg(\int_U \sum_{\theta\subset \tau}|\sum_{\ga\subset \theta}f_\ga|^2\bigg)^{\frac{p}{2}}s^{4-p}\\
    &\lesim|U|^{1-\frac{p}{2}}\sum_{\tau\in\bS_s}\sum_{U\parallel U_s}\bigg(\int_U \sum_{\ga\subset\tau}|f_\ga|^2\bigg)^{\frac{p}{2}}s^{4-p}\\
    (\textup{H\"older}) &\le s^{4-p}\sum_{\tau\in\bS_s}\sum_{U\parallel U_s}\int_U(\sum_{\ga\subset\tau}|f_\ga|^2)^{\frac{p}{2}}\\
    &=s^{4-p}\sum_{\tau\in\bS_s}\int_{\R^3}(\sum_{\ga\subset\tau}|f_\ga|^2)^{\frac{p}{2}}\\
    &\le s^{4-p}\int_{\R^3}(\sum_{\ga\in\Ga_{\beta}(R^{-1})}|f_\ga|^2)^{p/2}.
\end{align*} 
We just need to check
\[ s^{4-p}\le R^{\frac{\beta p}{2}+\frac{p}{4}-2}.\]
Plugging $s^{-1}\le R^{-1/2}$, the inequality above is reduced to
\[ R^{p/4}\le R^{\frac{\beta p}{2}},  \]
which is true since $\beta\ge 1/2$.

\medskip

$\bullet$ If $R^{-\beta}\le R^{-1}s^{-1}$, we will define a set of new planks which we call $\pi$. See on the right hand side of Figure \ref{fig6}. We partition $\theta$ into a set of $1\times R^{-1}s^{-1}\times R^{-1}$-planks, which we denoted by $\{\pi: \pi\subset \theta\}$. If the partition is well chosen (the size of caps can vary within a constant multiple), we can assume each $\ga$ fits into one $\pi$, so we define
\[ f_\pi:=\sum_{\ga\subset\pi}f_\ga. \]
Now, our key observation is that $\{\pi+U^*: \pi\subset \theta\}$ are finitely overlapping. This is true by noting that: the width of $U^*$ and $\pi$ are both $R^{-1}s^{-1}$; the angle between the longest edge of $\pi$ and $U^*$ is less than $R^{-1/2}$ and $R^{-1}s^{-2}\cdot R^{-1/2}\le R^{-1}s^{-1}$.
Therefore, we have that $\{f_\pi: \pi\subset \theta\}$ are locally orthogonal on $U$, i.e.,
\begin{equation}\label{localorth2}
    \int_U |\sum_{\pi\subset\theta}f_\pi|^2\lesim \int_U \sum_{\pi\subset\theta}|f_\pi|^2.
\end{equation}
Another step of Cauchy-Schwarz will give
\begin{equation}
    \int_U \sum_{\pi\subset\theta}|f_\pi|^2=\int_U \sum_{\pi\subset\theta}|\sum_{\ga\subset\pi}f_\ga|^2\le \#\{\ga\subset\pi\} \int_U \sum_{\ga\subset\theta}|f_\ga|^2=R^\beta R^{-1}s^{-1} \int_U \sum_{\ga\subset\theta}|f_\ga|^2.
\end{equation}
As a result, we obtain
\[ \int_U |f_\theta|^2\lesim R^\beta R^{-1}s^{-1}\int_U \sum_{\ga\subset\theta}|f_\ga|^2. \]
Summing over $\theta\subset\tau$, we obtain
\[ \int_U \sum_{\theta\subset\tau}|f_\theta|^2\lesim R^\beta R^{-1}s^{-1}\int_U \sum_{\ga\subset\tau}|f_\ga|^2. \]
Therefore,
\begin{align*}
    \|f\|_p^p&\lessapprox|U|^{1-\frac{p}{2}}\sum_{\tau\in\bS_s} \sum_{U\parallel U_\tau} \bigg(\int_U \sum_{\theta\subset\tau}|f_\theta|^2\bigg)^{p/2}s^{4-p}\\
    &\lesim |U|^{1-\frac{p}{2}}(R^\beta R^{-1}s^{-1})^{\frac{p}{2}} \sum_{\tau\in\bS_s} \sum_{U\parallel U_\tau} \bigg(\int_U \sum_{\ga\subset\tau}|f_\ga|^2\bigg)^{p/2}s^{4-p}\\
(\textup{H\"older})&\le s^{4-p}(R^\beta R^{-1}s^{-1})^{\frac{p}{2}}\sum_{\tau\in\bS_s}  \sum_{U\parallel U_\tau} \int_U (\sum_{\ga\subset\tau}|f_\ga|^2)^{p/2}\\
&\le s^{4-p}(R^\beta R^{-1}s^{-1})^{\frac{p}{2}}\|(\sum_{\ga\in\Ga_\beta(R^{-1})}|f_\ga|^2)^{\frac{1}{2}}\|_p^p.
\end{align*} 

We just need to check 
\[s^{4-p}(R^\beta R^{-1}s^{-1})^{\frac{p}{2}}\le R^{\frac{\beta p}{2}+\frac{p}{4}-2},\]
which is equivalent to 
\[ s^{4-\frac{3p}{2}}\le R^{\frac{3p}{4}-2}. \]
Plugging $s^{-1}\le R^{1/2}$ and noting that $4-\frac{3p}{2}<0$, we prove the result.


\end{proof}

\newpage

\bibliographystyle{abbrv}
\bibliography{bibli}

\end{document}